\newcommand{\rrvert}{\vert}
\newcommand{\llvert}{\vert}
\newtheorem{theorem}{Theorem}
\newtheorem{corollary}[theorem]{Corollary}
\newtheorem{lemma}[theorem]{Lemma}
\begin{document}
\begin{frontmatter}

\title{A multivariate Gnedenko law of large numbers}
\runtitle{law of large numbers}

\begin{aug}
\author[A]{\fnms{Daniel} \snm{Fresen}\corref{}\ead[label=e1]{daniel.fresen@yale.edu}\ead[label=u1,url]{http://www.danielfresen.com}}
\runauthor{D. Fresen}
\affiliation{Yale University}
\dedicated{In memory of Nigel J. Kalton}
\address[A]{Department of Mathematics\\
Yale University\\
10 Hillhouse Avenue\\
New Haven, Connecticut 06511-6810\\
USA\\
\printead{e1}\\
\printead{u1}} 
\end{aug}

\received{\smonth{11} \syear{2010}}
\revised{\smonth{5} \syear{2012}}

%
\begin{abstract}
We show that the convex hull of a large i.i.d. sample from an absolutely
continuous log-concave distribution approximates a predetermined convex
body in the logarithmic Hausdorff distance and in the Banach--Mazur distance.
For log-concave distributions that decay super-exponentially, we also have
approximation in the Hausdorff distance. These results are multivariate
versions of the Gnedenko law of large numbers, which guarantees
concentration of the maximum and minimum in the one-dimensional case.

We provide quantitative bounds in terms of the number of points and the
dimension of the ambient space.
\end{abstract}

%
\begin{keyword}[class=AMS]
\kwd[Primary ]{60D05}
\kwd{60F99}
\kwd[; secondary ]{52A20}
\kwd{52A22}
\kwd{52B11}
\end{keyword}

\begin{keyword}
\kwd{Random polytope}
\kwd{log-concave}
\kwd{law of large numbers}
\end{keyword}

\end{frontmatter}

\section{Introduction}

The Gnedenko law of large numbers~\cite{Gn} states that if $F$ is the
cumulative distribution of a probability measure $\mu$ on $\mathbb{R}$ such
that for all $\varepsilon>0$%
%
\begin{equation}
\lim_{t\rightarrow\infty}\frac{F(t+\varepsilon
)-F(t)}{1-F(t+\varepsilon)}%
=\infty \label{Gnedenko
condition},
\end{equation}
then there are functions $\delta$, $T$ and $\mathcal{P}$ defined on $%
\mathbb{N}$ with%
%
\begin{eqnarray}
\lim_{n\rightarrow\infty}\delta_{n} &=&0, \label{error}
\\
\lim_{n\rightarrow\infty}\mathcal{P}_{n} &=&1 \label{prob}
\end{eqnarray}
such that for any $n\in\mathbb{N}$ and any i.i.d. sample $(\gamma
_{i})_{1}^{n}$ from $\mu$, with probability~$\mathcal{P}_{n}$, we have%
\[
\bigl\llvert \max\{\gamma_{i}\}_{1}^{n}-T_{n}
\bigr\rrvert <\delta_{n}.
\]
We define $0/0=\infty$ to allow for the trivial case when $\mu$ has
bounded support. The condition (\ref{Gnedenko condition}) implies
super-exponential decay of the tail probabilities $1-F(t)$, that is,
for all $%
c>0 $,%
\[
\lim_{t\rightarrow\infty}e^{ct}\bigl(1-F(t)\bigr)=0.
\]
The converse is almost true and can be achieved if we impose some sort of
regularity on $F$. One such regularity condition is log-concavity; see
Section~\ref{background}. Of course all of this can be re-worded in
multiplicative form.  Provided $1-F(t)$ is regular enough and decays
super-polynomially, that is, for any $m\in\mathbb{N}$,%
\[
\lim_{t\rightarrow\infty}t^{m}\bigl(1-F(t)\bigr)=0,
\]
then (\ref{error}) and (\ref{prob}) hold, and with probability $\mathcal
{P}%
_{n}$,%
\[
\biggl\llvert \frac{\max\{\gamma_{i}\}_{1}^{n}}{T_{n}}-1\biggr\rrvert \leq \delta_{n}.
\]
Note that rapid decay of the left-hand tail provides concentration of
$\min
\{\gamma_{i}\}_{1}^{n}$, and that $[\min\{\gamma_{i}\}_{1}^{n},\max
\{\gamma_{i}\}_{1}^{n}]=\operatorname{conv}\{\gamma_{i}\}_{1}^{n}$.

In this paper we extend the Gnedenko law of large numbers to the
multivariate setting. We consider a large collection of i.i.d. random
vectors $\{x_{i}\}_{1}^{n}$ in $\mathbb{R}^{d}$ that follow a log-concave
distribution $\mu$ with density function $f$. The object of interest
is the
convex hull $P_{n}=\operatorname{conv}\{x_{i}\}_{1}^{n}$, which is called a random
polytope. It is shown that with high probability, $P_{n}$ approximates a
deterministic body $F_{1/n}$ called the floating body, which is what remains
of $\mathbb{R}^{d}$ after deleting all open half-spaces $\mathfrak{H}$ such
that $\mu(\mathfrak{H})<1/n$. As in the one-dimensional case, the way in
which $P_{n}$ approximates $F_{1/n}$ depends on how rapidly $\mu$ decays.
Of primary interest is a quantitative analysis in terms of the number of
points, and in this regard our results are essentially optimal; see
Section %
\ref{optimality section}.

The fact that the floating body can be used in order to model random
polytopes is well known in the setting where $\mu$ is the uniform
distribution on a convex body; see, for example,~\cite{BL} and \cite
{Ba}. Our
main contribution is to study this approximation in the more general setting
of log-concave measures. Unlike the former case, the objects that we study
can have many different shapes as $n\rightarrow\infty$ and are not limited
to lie within a bounded region of space.

The notion of a multivariate Gnedenko law of large numbers has also been
considered by Goodman~\cite{Go} in the setting of Gaussian measures on
separable Banach spaces. In his paper he shows that with probability $1$,
the Hausdorff distance between the sample $\{x_{i}\}_{1}^{n}$ and the
ellipsoid $\sqrt{2\log n} \mathcal{E}$ converges to zero as
$%
n\rightarrow\infty$, where $\mathcal{E}$ is the unit ball of the
reproducing kernel Hilbert space associated to $\mu$.

\section{Main results}

Let $d\geq1$, $n\geq d+1$ and let $\mu$ be a log-concave probability
measure on $\mathbb{R}^{d}$ with a density function $f=d\mu/dx$. This means
that $f$ is of the form $f(x)=\exp(-g(x))$ where $g$ is convex. Let $%
(x_{i})_{1}^{n}$ denote a sequence of i.i.d. random vectors in $\mathbb
{R}%
^{d}$ with distribution $\mu$, and consider the random polytope $%
P_{n}=\operatorname{conv}\{x_{i}\}_{1}^{n}$. For any $x\in\mathbb{R}^{d}$, define%
\[
\widetilde{f}(x)=\inf_{\mathfrak{H}}\mu(\mathfrak{H}),
\]
where $\mathfrak{H}$ runs through the collection of all open half-spaces
that contain $x$. For any $\delta>0$, the \textit{floating body} is defined
as%
%
\begin{equation}
F_{\delta}=\bigl\{x\in\mathbb{R}^{d}\dvtx \widetilde{f}(x)\geq
\delta\bigr\} \label{floating body}.
\end{equation}
Note that $F_{\delta}$ is the intersection of all closed half-spaces $%
\mathfrak{H}$ such that $\mu(\mathfrak{H})\geq1-\delta$, and is
therefore convex. If $\mathfrak{H}$ is any open half-space that
contains the
centroid of~$\mu$, then $\mu(\mathfrak{H})\geq e^{-1}$ (see Lemma
5.12 in
\cite{LV} or Lemma 3.3 in~\cite{Bo}); hence $F_{\delta}$ is nonempty
provided that $\delta\leq e^{-1}$. Such a floating body was defined by
Sch%
\"{u}tt and Werner~\cite{SW} in the case where $\mu$ is the uniform
distribution on a convex body. We define the \textit{logarithmic Hausdorff
distance} between convex bodies $K,L\subset\mathbb{R}^{d}$ as
\[
d_{\mathfrak{L}}(K,L)=\inf\bigl\{\lambda\geq1\dvtx \exists x\in \operatorname{int}(K\cap L),
\lambda^{-1}(L-x)+x\subset K\subset\lambda(L-x)+x\bigr\},
\]
where we use the convention that $\inf(\varnothing)=\infty$. The main
result of the paper is as follows:

\begin{theorem}
\label{main 1}There exist universal constants $c,c^{\prime},\widetilde{c}>0$
with the following property. Let $q\geq1$, $d\in\mathbb{N}$ and $n\geq
c\exp\exp(5d)+c^{\prime}\exp(q^{3})$. Let $\mu$ be a probability
measure on $\mathbb{R}^{d}$ with a log-concave density function, $%
(x_{i})_{1}^{n}$ an i.i.d. sample from $\mu$, $P_{n}=\operatorname{conv}\{x_{i}\}_{1}^{n}$
and $F_{1/n}$ the floating body as in (\ref{floating body}). With
probability at least $1-3^{d+3}(\log n)^{-q}$,%
%
\begin{equation}
d_{\mathfrak{L}}(P_{n},F_{1/n})\leq1+\widetilde{c}d(d+q)
\frac{\log
\log n}{%
\log n} \label{main bound}.
\end{equation}
\end{theorem}

The strategy of the proof is to use quantitative bounds in the
one-dimen\-sional case to analyze the dual Minkowski functional of
$P_{n}$ in different
directions. The idea is simple; however, there are some subtle
complications. The lack of symmetry is a complicating factor, and the fact
that the half-spaces of mass $1/n$ do not necessarily touch $F_{1/n}$ adds
to the intricacy of the proof.

We define $f$ to be $p$-log-concave if it is of the form $f(x)=c\exp
(-g(x)^{p})$ where $g$ is a nonnegative convex function and $c>0$.

\begin{theorem}
\label{main 2}For all $q>0$, $p>1$ and $d\in\mathbb{N}$, and any
probability measure $\mu$ on $\mathbb{R}^{d}$ with a $p$-log-concave
density function, there exist $c,\widetilde{c}>0$ such that for all
$n\in
\mathbb{N}$ with $n\geq d+2$, if $(x_{i})_{1}^{n}$ is an i.i.d.\vadjust{\goodbreak} sample
from $%
\mu$, $P_{n}=\operatorname{conv}\{x_{i}\}_{1}^{n}$ and $F_{1/n}$ is the floating body as
in (\ref{floating body}), then with probability at least $1-\widetilde
{c}%
(\log n)^{-q}$ we have%
%
\begin{equation}
d_{\mathcal{H}}(P_{n},F_{1/n})\leq c\frac{\log\log n}{ ( \log
n )
^{1-{1}/{p}}}
\label{main hausdorff bound}.
\end{equation}
\end{theorem}

Theorem~\ref{main 2} can easily be extended to a much larger class of
log-concave distributions. Using Theorem~\ref{main 1}, any bound on the
growth rate of $\operatorname{diam}(F_{1/n})$ automatically transfers to a bound on
$d_{%
\mathcal{H}}(P_{n},F_{1/n})$.

Our prototypical example is the class of distributions introduced by
Schechtman and Zinn~\cite{SZ} of the form $f(x)=c_{p}^{d}\exp
(-\Vert x\Vert _{p}^{p})$, where $1\leq p<\infty$ and $c_{p}=p/(2\Gamma
(p^{-1}))$. For these distributions, $P_{n}\approx(\log
(c_{p}^{d}n))^{1/p}B_{p}^{d}$. Of particular interest is the Gaussian distribution, where $p=2$. In this
case (actually for the standard Gaussian distribution), B\'{a}r\'{a}ny and
Vu~\cite{BV} obtained a similar approximation (see Remark 9.6 in their
paper) and showed that there exist two radii, $R$ and $r$, both
functions of $%
n$ and $d$, such that for any fixed $d\geq2$ both $r,R=(2\log
n)^{1/2}(1+o(1))$ as $n\rightarrow\infty$, and with ``high
probability'' $%
rB_{2}^{d}\subset P_{n}\subset RB_{2}^{d}$. Their sandwiching result served
as a key step in their proof of the central limit theorem for Gaussian
polytopes (asymptotic normality of various functionals such as the volume
and the number of faces).

In the setting where $\mu$ is the uniform distribution on a convex body,
the floating body is usually denoted by $K_{\delta}$. In this context
it is
trivial that $\lim_{n\rightarrow\infty}d_{\mathcal{H}}(P_{n}, K)=0$ (almost
surely), and the phenomenon of interest is the rate at which $P_{n}$
approached the boundary of $K$. B\'{a}r\'{a}ny and Larman~\cite{BL} proved
that for $n\geq n_{0}(d)$,%
\[
c^{\prime}\operatorname{vol}_{d}(K\setminus K_{1/n})\leq
\mathbb{E}\mathrm {vol}%
_{d}(K\setminus P_{n})\leq
c^{\prime\prime}(d)\operatorname{vol}%
_{d}(K\setminus
K_{1/n}).
\]
The reader may be interested to contrast our results with the results in
\cite{DGT}. The results presented here require a very large sample size and
guarantee a precise approximation, somewhat in the spirit of the
``almost-isometric'' theory of convex bodies. On the other hand, the results
presented in~\cite{DGT} describe a type of approximation in the spirit of
the ``isomorphic'' theory, and are most interesting, specifically in high-dimensional spaces.

We also study two other deterministic bodies that serve as approximants to
the random body. Define%
\[
f^{\sharp}(x)=\inf_{\mathcal{H}}\int_{\mathcal{H}}f(y)\,d_{\mathcal{H}}(y),
\]
where $\mathcal{H}$ runs through the collection of all hyperplanes that
contain $x$, and $d_{\mathcal{H}}$ stands for Lebesgue measure on
$\mathcal{H%
}$. For any $\delta>0$, define the bodies%
\begin{eqnarray*}
D_{\delta}&=&\operatorname{Cl}\bigl\{x\in\mathbb{R}^{d}\dvtx f(x)\geq\delta\bigr
\},
\\
R_{\delta}&=&\operatorname{Cl}\bigl\{x\in\mathbb{R}^{d}\dvtx f^{\sharp}(x)
\geq\delta\bigr\},
\end{eqnarray*}
where $\operatorname{Cl}(E)$ denotes the closure of a set $E$. By log-concavity of $f$,
both $D_{\delta}$ and $R_{\delta}$ are convex.\vadjust{\goodbreak}

\begin{theorem}
\label{main 3}Let $d\in\mathbb{N}$, and let $\mu$ be a probability measure
on $\mathbb{R}^{d}$ with a continuous nonvanishing log-concave density
function. Then we have%
%
\begin{eqnarray}
\lim_{\delta\rightarrow0}d_{\mathfrak{L}}(F_{\delta},D_{\delta})
&=&1, \label{FD1}
\\
\lim_{\delta\rightarrow0}d_{\mathfrak{L}}(F_{\delta},R_{\delta})
&=&1. \label{FR1}
\end{eqnarray}
\end{theorem}

Similar results hold in the Hausdorff distance for log-concave distributions
that decay super-exponentially.

Let $X\in\mathbb{R}^{d}$ be a random vector with distribution $\mu$. The
random variable $-\log f(X)$ is a type of differential information
content; see~\cite{BM}. The differential entropy of $\mu$ is defined as
\begin{eqnarray*}
h(\mu) &=&-\mathbb{E}\log f(X)
\\
&=&-\int_{\mathbb{R}^{d}}f(x)\log f(x)\,dx
\end{eqnarray*}
and the entropy power defined as $N(\mu)=\exp(2d^{-1}h(\mu))$. Note that
the distribution of $-\log f(X)$ can be expressed in terms of the
function $%
\delta\mapsto\mu(D_{\delta})$,%
\[
 \mathbb{P}\bigl\{-\log f(X)\leq t\bigr\}=
\mu(D_{\delta})  \dvtx  \delta=e^{-t}.%
\]
Because of the rapid decay of $f$, the body $D_{\delta}$ acts as an
essential support for the measure $\mu$. For $\delta=e^{-d}$, this was
studied by Klartag and Milman~\cite{KM}; see Lemma~2.2 and Corollary
2.4 in
their paper. Bobkov and Madiman later provided a more precise description.
In~\cite{BM} they show that the variance of $-\log f(X)$ is at most $Cd$,
where $C>0$ is a universal constant, and that in high-dimensional
spaces, $%
f(X)^{2/d}$ is strongly concentrated around $N(\mu)$. Theorem 1.1 in their
paper can be written as%
\[
\mu\bigl\{x\in\mathbb{R}^{d}\dvtx N(\mu)^{-d/2}\delta<f(x)<N(
\mu)^{-d/2}\delta ^{-1}\bigr\}>1-2\delta^{p(d)}
\]
provided $\delta\in(0,1)$, where $p(d)=16^{-1}d^{-1/2}$. In Lemma \ref
{mass outside contour} we show that if $\mu$ is isotropic and has a
continuous density function, then for all $\delta<\exp(-10d\log d-7)$,
%
\begin{equation}
\mu\bigl\{x\in\mathbb{R}^{d}\dvtx f(x)\geq\delta\bigr\}\geq1-
\alpha_{d}\delta \bigl(\log \delta^{-1}\bigr)^{d},
\label{natural dependence on delta}
\end{equation}
where $\alpha_{d}=c_{1}\exp(3d^{2}\log d)$. In a fixed dimension,
inequality (\ref{natural dependence on delta}) displays the natural
quantitative behavior of $\mu(D_{\delta})$ as $\delta\rightarrow0$ and
is sharp up to a factor of $\log\delta^{-1}$.

Let $\mathcal{K}_{d}$ denote the collection of all convex bodies in
$\mathbb{%
R}^{d}$. For all $K,L\in\mathcal{K}_{d}$, define%
%
\begin{equation}
d_{\mathrm{BM}}(K,L)=\inf\bigl\{\lambda\geq1\dvtx \exists x\in
\mathbb{R}^{d},
\exists T, K\subset TL\subset
\lambda(K-x)+x\bigr\}, \label{affine BM}
\end{equation}
where $T$ represents any affine transformation of $\mathbb{R}^{d}$.
This is
a modification of the classical Banach--Mazur distance between normed spaces
(origin symmetric bodies).

\begin{theorem}
\label{main 4}For all $d\in\mathbb{N}$, there exists a probability
measure $%
\mu$ on $\mathbb{R}^{d}$ with the following universality property. Let
$%
(x_{i})_{1}^{\infty}$ be an i.i.d. sample from $\mu$, and for each
$n\in
\mathbb{N}$ with $n\geq d+1$, let $P_{n}=\operatorname{conv}\{x_{i}\}_{1}^{n}$. Then with
probability~1, the sequence $(P_{n})_{d+1}^{\infty}$ is dense in
$\mathcal{K%
}_{d}$ with respect to $d_{\mathrm{BM}}$.
\end{theorem}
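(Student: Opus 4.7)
The plan is to take $\mu$ to be a purely atomic probability measure supported on the vertices of a nested sequence of rescaled polytopes approximating a dense family of convex bodies, arranged so that at certain random times the convex hull $P_n$ coincides exactly with one of these polytopes.

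Fix a sequence $(L_j)_{j\geq 1}$ in $\mathcal{K}_d$ such that for every $J\geq 1$ the tail $\{L_j:j\geq J\}$ is dense in $\mathcal{K}_d$ for the Banach--Mazur distance (take any countable dense sequence and repeat each entry infinitely often). For each $j$ select a polytope $Q_j$ with vertex set $V_j^0$ of cardinality $k_j\geq d+1$, normalized so that $B(0,r_j)\subset Q_j\subset B(0,1)$ and $d_{BM}(Q_j,L_j)<1+1/j$. Pick scales $R_1<R_2<\cdots$ with $R_j r_j \geq R_{j-1}$, so that $R_{j-1}\overline{B(0,1)}\subset R_j Q_j$; in particular the nesting $V_k:=R_k V_k^0\subset R_j Q_j$ holds for every $k\leq j$. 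Give each $v\in V_j$ the atom weight $w_j/k_j$, where $(w_j)_{j\geq 1}$ is a positive probability sequence whose tails decrease super-exponentially (e.g.\ $w_j=c\cdot 2^{-2^j}$).

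Consider the event $H_j$: every point of $V_j$ is sampled before any sample lands in $V_{>j}:=\bigcup_{k>j}V_k$. If $H_j$ holds and $n$ is the first time $V_j$ is fully covered, then the nesting forces $\{x_1,\dots,x_n\}\subset V_1\cup\cdots\cup V_j\subset R_j Q_j$, while $V_j\subset\{x_1,\dots,x_n\}$ gives $P_n=\mathrm{conv}(V_j)=R_j Q_j$. Affine invariance of the Banach--Mazur distance then yields $d_{BM}(P_n,L_j)=d_{BM}(Q_j,L_j)<1+1/j$. For any $N_j\in\mathbb{N}$, a union bound splits $H_j^c$ into ``some $V_{>j}$ sample occurs in the first $N_j$ trials'' and ``$V_j$ is not covered by the first $N_j$ trials,'' giving the coupon-collector-with-death estimate
\[
P(H_j^c)\leq N_j\,w_{>j}+k_j\,e^{-N_j w_j/k_j}.
\]
Taking $N_j\asymp k_j(j+\log k_j)/w_j$ makes the second term $\leq e^{-j}$, and the super-exponential decay of $(w_j)$ makes $N_j w_{>j}$ comparably small, so that $P(H_j^c)\leq 2^{-j+1}$.

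The first Borel--Cantelli lemma then gives, almost surely, $H_j$ for all but finitely many $j$. For any $L\in\mathcal{K}_d$ and $\varepsilon>0$, tail-density of $(L_j)$ provides arbitrarily large $j$ with $d_{BM}(L_j,L)<1+\varepsilon/3$; choosing such a $j$ that is large enough that $H_j$ holds and $1/j<\varepsilon/3$, multiplicativity of the Banach--Mazur distance yields $d_{BM}(P_n,L)<(1+1/j)(1+\varepsilon/3)<1+\varepsilon$ for the corresponding $n\geq d+1$. The principal obstacle, in my view, is that the Hewitt--Savage 0-1 law cannot be invoked directly on ``$\exists n:d_{BM}(P_n,L_j)<1+1/j$,'' because reshuffling early samples can destroy a short-time witness; one must therefore aim for directly summable failure probabilities, and the scale-nesting construction is what makes this possible by pinning $P_n$ to a specific deterministic polytope precisely when $H_j$ materializes.
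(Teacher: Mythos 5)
Your proof is correct in substance, but it takes a genuinely different route from the paper. The paper's construction stays inside the class studied in the rest of the text: it builds a concave, body-valued map $\kappa (t)=\sum_{n}\alpha _{n}(t)K_{n}$ (a Minkowski series over a Banach--Mazur dense sequence in John position, with coefficients arranged so that $\kappa (2^{2n^{2}})$ is $(1+2^{-n+2}d)$-close to $K_{n}$), turns it into a convex function $g$ via sublevel sets, and takes $f(x)=2^{-g(cx)}$; this gives a \emph{non-vanishing log-concave} density whose level sets $D_{1/n}$ are dense, whence $F_{1/n}$ are dense by Theorem 3, and $P_{n}$ inherits density from Theorem 1 along subsequences with summable failure probabilities. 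Your measure is purely atomic: you pin $P_{n}$ \emph{exactly} to $R_{j}Q_{j}$ at the first covering time of $V_{j}$ on the event $H_{j}$, and conclude by a coupon-collector estimate plus Borel--Cantelli, using none of Theorems 1--3. What each buys: yours is elementary and self-contained, and yields exact rather than approximate witnesses; the paper's proves the formally stronger (and, in context, more relevant) fact that the universal measure can be taken with a non-vanishing log-concave density, at the cost of running through the main approximation machinery. Two details you should make explicit, both trivially fixable: take $R_{j}r_{j}>R_{j-1}$ strictly so that the sets $V_{j}$ are pairwise disjoint (otherwise $H_{j}$ can be vacuous and the mass of the set $V_{>j}$ need not equal $\sum_{k>j}w_{k}$); and note that the requirement $N_{j}w_{>j}\leq e^{-j}$ with $N_{j}\asymp k_{j}(j+\log k_{j})/w_{j}$ is a condition relating the weights to the vertex counts $k_{j}$, so either record that $k_{j}$ can be taken bounded by a fixed polynomial in $j$ (John position plus standard polytope approximation), or simply choose the weights recursively after the polytopes, e.g. $w_{>j}\leq e^{-j}N_{j}^{-1}$; the fixed choice $w_{j}=c\,2^{-2^{j}}$ is not automatically compatible with an arbitrary sequence $(k_{j})$. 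With these adjustments the decomposition $P(H_{j}^{c})\leq N_{j}w_{>j}+k_{j}e^{-N_{j}w_{j}/k_{j}}$, the Borel--Cantelli step, and the quantifier structure (the almost sure event ``all but finitely many $H_{j}$ occur'' is fixed before $L$ and $\varepsilon$) are all sound, and your concluding use of submultiplicativity and affine invariance of $d_{BM}$ is correct.
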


Throughout the paper we will make use of variables $c$, $\widetilde{c}$,
$c_{1}$, $c_{2}$, $n_{0}$, $m$, etc. At times they represent
universal constants, and at other times they depend on parameters such
as the
dimension $d$ or the measure $\mu$. Such dependence will always be clear
from the context, and will either be indicated explicitly as $c_{d}$,
$c(d)$, $n_{0}(d)$, etc., or implicitly as in Theorem~\ref{main 2}, where $c$
and $%
\widetilde{c}$ depend on $q$, $p$, $d$ and~$\mu$. Half-spaces shall be
indexed as $\mathfrak{H}_{\theta,t}=\{x\in\mathbb{R}^{d}\dvtx  \langle
x,\theta \rangle\geq t\}$ and hyperplanes as $\mathcal{H}_{\theta,t}=\{x\in\mathbb{R}^{d}\dvtx  \langle x,\theta \rangle=t\}$,
where $%
\theta\in S^{d-1}$ and $t\in\mathbb{R}$.

\section{Background}\label{background}

Most of the material in this section is discussed in~\cite{Ball1,Ball2,Ma} and~\cite{MS}. We denote the standard Euclidean norm
on $%
\mathbb{R}^{d}$ by $\Vert \cdot\Vert _{2}$.  For any $\varepsilon>0$, an $%
\varepsilon$-net in $S^{d-1}$ is a subset $\mathcal{N}$ such that for any
distinct $\omega_{1},\omega_{2}\subset\mathcal{N}$, $\Vert \omega
_{1}-\omega
_{2}\Vert _{2}>\varepsilon$, and for all $\theta\in S^{d-1}$ there exists
$%
\omega\in\mathcal{N}$ such that $\Vert \theta-\omega\Vert _{2}\leq
\varepsilon$. Such a subset can easily be constructed using induction.  By a standard
volumetric argument, we have%
%
\begin{equation}
|\mathcal{N}|\leq \biggl( \frac{3}{\varepsilon} \biggr) ^{d}. \label{net
bound}
\end{equation}
By induction, any $\theta\in S^{d-1}$ can be expressed as a series%
%
\begin{equation}
\theta=\omega_{0}+\sum_{i=1}^{\infty}
\varepsilon_{i}\omega_{i}, \label{series}
\end{equation}
where each $\omega_{i}\in\mathcal{N}$ and $0\leq\varepsilon_{i}\leq
\varepsilon^{i}$.  To see this,\vspace*{1pt} express $\theta=\omega_{0}+r_{0}$, where
$\omega_{0}\in\mathcal{N}$ and $\Vert r_{0}\Vert _{2}\leq\varepsilon$. Then
express $\Vert r_{0}\Vert ^{-1}r_{0}\in S^{d-1}$ in a similar fashion, and iterate
this procedure.

Define the functional%
\[
\Vert x\Vert _{\mathcal{N}}=\max\bigl\{ \langle x,\omega \rangle\dvtx \omega
\in \mathcal{N}\bigr\}.
\]
As an easy consequence of the Cauchy--Schwarz inequality, provided $%
\varepsilon\in(0,1)$ we have%
%
\begin{equation}
(1-\varepsilon)\Vert x\Vert _{2}\leq\Vert x\Vert _{\mathcal{N}}\leq
\Vert x\Vert _{2}, \label{net aprox}
\end{equation}
which implies that%
%
\begin{equation}
B_{2}^{d}\subset B_{\mathcal{N}}\subset(1-
\varepsilon)^{-1}B_{2}^{d}, \label{geometric net
approx}
\end{equation}
where $B_{\mathcal{N}}=\{x\dvtx $ $\Vert x\Vert _{\mathcal{N}}\leq1\}$. The body
$B_{%
\mathcal{N}}$ is what remains if one deletes all open half-spaces that are
tangent to $B_{2}^{d}$ at points in $\mathcal{N}$.

A \textit{convex body} is a compact convex subset of Euclidean space with
nonempty interior. For a convex body $K\subset\mathbb{R}^{d}$ that contains
the origin as an interior point, its \textit{Minkowski functional} is
defined as%
\[
\Vert x\Vert _{K}=\inf\{\lambda>0\dvtx x\in\lambda K\}
\]
for all $x\in\mathbb{R}^{d}$.  By convexity of $K$, one can easily show
that $\Vert \cdot\Vert _{K}$ obeys the triangle inequality. The dual Minkowski
functional is defined as%
\[
\Vert y\Vert _{K^{\circ}}=\sup\bigl\{ \langle x,y \rangle\dvtx x\in K\bigr\}
\]
for all $y\in\mathbb{R}^{d}$, and the \textit{polar} of $K$ is%
\[
K^{\circ}=\bigl\{y\in\mathbb{R}^{d}\dvtx \Vert y\Vert
_{K^{\circ}}\leq1\bigr\}.
\]
By the Hahn--Banach theorem, $K^{\circ\circ}=K$.

The \textit{Hausdorff distance} $d_{\mathcal{H}}$ between $K$ and $L$ is
defined as%
\[
d_{\mathcal{H}}(K,L)=\max\Bigl\{\max_{k\in K}d(k,L);\max
_{l\in L}d(K,l)\Bigr\}.
\]
By convexity this reduces to%
\begin{eqnarray*}
d_{\mathcal{H}}(K,L) &=&\sup_{\theta\in S^{d-1}}\Bigl|\sup
_{k\in K} \langle k,\theta \rangle-\sup_{l\in L}
\langle l,\theta \rangle\Bigr|
\\
&=&\sup_{\theta\in S^{d-1}}\bigl|\bigl(\Vert \theta\Vert _{K^{\circ}}-\Vert
\theta \Vert _{L^{\circ
}}\bigr)\bigr|.
\end{eqnarray*}
We define the \textit{logarithmic Hausdorff distance} between $K$ and $L$
about a point $x\in \operatorname{int}(K\cap L)$ as%
\[
d_{\mathfrak{L}}(K,L,x)=\inf\bigl\{\lambda\geq1\dvtx \lambda
^{-1}(L-x)+x\subset K\subset\lambda(L-x)+x\bigr\}
\]
provided $\operatorname{int}(K\cap L)\neq\varnothing$, and
\[
d_{\mathfrak{L}}(K,L)=\inf\bigl\{d_{\mathfrak{L}}(K,L,x)\dvtx x\in \operatorname{int}(K\cap
L)\bigr\}.
\]
Note that%
\[
\log d_{\mathfrak{L}}(K,L,0)=\sup_{\theta\in S^{d-1}}\bigl|\log\Vert \theta
\Vert _{K}-\log\Vert \theta\Vert _{L}\bigr|.
\]
The following relations follow from the definitions above,%
%
\begin{eqnarray}
\label{af} d_{\mathfrak{L}}(K,L,0) &=&d_{\mathfrak{L}}\bigl(K^{\circ},L^{\circ},0
\bigr),
\nonumber
\\[-8pt]
\\[-8pt]
\nonumber
d_{\mathfrak{L}}(TK,TL) &=&d_{\mathfrak{L}}(K,L),
\end{eqnarray}
where $T$ is any invertible affine transformation. In addition, one can check
that%
%
\begin{eqnarray}
\label{HL} d_{\mathrm{BM}}(K,L) &\leq&d_{\mathfrak{L}}(K,L)^{2},
\nonumber
\\[-8pt]
\\[-8pt]
\nonumber
d_{\mathcal{H}}(K,L) &\leq&\operatorname{diam}(K) \bigl(d_{\mathfrak{L}}(K,L)-1\bigr);
\end{eqnarray}
hence all of our bounds in terms of $d_{\mathfrak{L}}$ apply equally
well to
$d_{\mathrm{BM}}$. For large bodies,\vadjust{\goodbreak} $d_{\mathcal{H}}$ is more sensitive than
$d_{%
\mathfrak{L}}$. More precisely, if $r\geq1$ and $rB_{2}^{d}+x\subset
K$ for
some $x\in\mathbb{R}^{d}$, and if $d_{\mathcal{H}}(K,L)\leq1/2$, then%
%
\begin{equation}
d_{\mathfrak{L}}(K,L)\leq1+2r^{-1}d_{\mathcal{H}}(K,L). \label{LH}
\end{equation}

By a simple compactness argument, there is an ellipsoid of maximal
volume $%
\mathcal{E}_{K}\subset K$. This ellipsoid is called the \textit{John
ellipsoid}~\cite{Ball2} associated to $K$. It can be shown that
$\mathcal{E}%
_{k}$ is unique and has the property that $K\subset d(\mathcal{E}_{k}-x)+x$,
where $x$ is the center of $\mathcal{E}_{k}$.  In particular,
$d_{\mathfrak{%
L}}(\mathcal{E}_{k},K)\leq d$.

In~\cite{Fr} it is shown that provided $\lambda<8^{-d}$, we have%
%
\begin{equation}
d_{\mathfrak{L}}(K,K_{\lambda},x)\leq1+8\lambda^{1/d}, \label{Fr
bound}
\end{equation}
where $x$ is the centroid of $K$ and $K_{\delta}$ is the floating body
inside $K$.

The \textit{cone measure} on $\partial K$ is defined as%
\[
\mu_{K}(E)=\operatorname{vol}_{d}\bigl(\bigl\{r\theta\dvtx
\theta\in E, r\in \lbrack 0,1]\bigr\}\bigr)
\]
for all measurable $E\subset\partial K$. The significance of the cone
measure is that it leads to a natural polar integration formula (see
\cite%
{NR}); for all $f\in L_{1}(\mathbb{R}^{d})$,
%
\begin{equation}
\int_{\mathbb{R}^{d}}f(x)\,dx=d\int_{0}^{\infty}
\int_{\partial
K}r^{d-1}f(r\theta)\,d\mu_{K}(
\theta)\,dr. \label{polar formula}
\end{equation}

A probability measure $\mu$ is called \textit{isotropic} if its centroid
lies at the origin and its covariance matrix is the $d\times d$ identity
matrix.

A function $f\dvtx \mathbb{R}^{d}\rightarrow\lbrack0,\infty)$ is called
\textit{log-concave} (see~\cite{KM}) if%
\[
f\bigl(\lambda x+(1-\lambda)y\bigr)\geq f(x)^{\lambda}f(y)^{1-\lambda}
\]
for all $x,y\in\mathbb{R}^{d}$ and all $\lambda\in(0,1)$. Any such
function can be written in the form $f(x)=e^{-g(x)}$ where $g\dvtx \mathbb{R}
^{d}\rightarrow(-\infty,\infty]$ is convex. If $f$ is the density of a
probability measure $\mu$, then it must decay exponentially to zero. In
this case $g$ lies above a cone, that is,%
%
\begin{equation}
g(x)\geq m\Vert x\Vert _{2}-c \label{domination of cone}
\end{equation}
with $m,c>0$. As a consequence of the Pr\'{e}kopa--Leindler inequality
\cite%
{Ball1}, if $x$ is a random vector with log-concave density, and $y$ is any
fixed vector, then $ \langle x,y \rangle$ has a log-concave
density in $\mathbb{R}$. Log-concave functions are very rigid. One such
example of this rigidity (see Lemma~5.12 in~\cite{LV}) is the fact that
if $%
\mathfrak{H}$ is any half-space containing the centroid of $\mu$, then
$\mu
(\mathfrak{H})\geq e^{-1}$. Another example (see Theorem~5.14 in~\cite{LV})
is that if $\mu$ is isotropic, then%
%
\begin{eqnarray}
2^{-7d}&\leq& f(0)\leq d(20d)^{d/2}, \label{rigid 1}
\\
(4\pi e)^{-d/2}&\leq&\Vert f\Vert _{\infty}\leq2^{8d}d^{d/2}
\label{rigid 3},
\end{eqnarray}
and if $\Vert x\Vert _{2}\leq1/9$, then%
%
\begin{equation}
2^{-8d}\leq f(x)\leq d2^{d}(20d)^{d/2} \label{rigid
2}.
\end{equation}
Let $1\leq p<\infty$. If $g\dvtx \mathbb{R}^{d}\rightarrow\lbrack
0,\infty]$
is convex and $\lim_{x\rightarrow\infty}g(x)=\infty$, then the
probability measure with density given by%
\[
f(x)=ce^{-g(x)^{p}}
\]
will be called $p$-log-concave. This is a natural generalization of the
normal distribution. If $f$ is $p$-log-concave, then it is also
$p^{\prime}$-log-concave for all $1\leq p^{\prime}\leq p$.

Let $\mathbf{H}_{d}$ denote the collection of all $(d-1)$-dimensional affine
subspaces (hyperplanes) of $\mathbb{R}^{d}$. The Radon transform of an
integrable
log-concave function $f\dvtx \mathbb{R}^{d}\rightarrow\lbrack0,\infty)$
is the
function $Rf\dvtx \mathbf{H}_{d}\rightarrow\lbrack0,\infty)$ defined by%
%
\begin{equation}
Rf(\mathcal{H})=\int_{\mathcal{H}}f(y)\,d_{\mathcal{H}}(y),
\label{Radon def}
\end{equation}
where $d_{\mathcal{H}}$ is Lebesgue measure on $\mathcal{H}$. The Radon
transform is closely related to the Fourier transform. See~\cite{Ko}
for a
discussion of these operators and their connections to convex geometry.

\section{The one-dimensional case}

Let $f$ be a nonvanishing log-concave probability density function on $
\mathbb{R}$ associated to a probability measure $\mu$.  In
particular, $%
f(t)=e^{-g(t)}$ where $g\dvtx \mathbb{R}\rightarrow\mathbb{R}$ is convex.  For $%
t\in\mathbb{R}$, define%
\begin{eqnarray*}
J(t) &=&\int_{-\infty}^{t}f(s)\,ds,
\\
u(t) &=&-\log\bigl(1-J(t)\bigr).
\end{eqnarray*}
The cumulative distribution function $J$ is a strictly increasing bijection
between $\mathbb{R}$ and $(0,1)$.  The following lemma is a standard
result; see, for example, Theorem~5.1 in~\cite{LV} for the statement,
and the references
given there.  However, we include a short proof here for completeness.

\begin{lemma}
$u$ is convex.
\end{lemma}

\begin{pf}
Assume momentarily that $g\in C^{2}(\mathbb{R})$.  For $t\in(0,1)$
define%
\[
\psi(t)=f\bigl(J^{-1}(1-t)\bigr).
\]
Note that%
\[
\psi^{\prime\prime}(t)=\frac{-g^{\prime\prime}(J^{-1}(1-t))}{\psi
(t)}%
\leq0.
\]
Hence $\psi$ is concave.  In addition, $\lim_{t\rightarrow0}\psi
(t)=\lim_{t\rightarrow1}\psi(t)=0$.  Hence, the function $\kappa
(t)=\psi
(t)/t$ is nonincreasing on $(0,1)$, and the function
$f(t)/(1-J(t))=\kappa
(1-J(t))$ is nondecreasing on $\mathbb{R}$.  Since $u^{\prime
}(t)=f(t)/(1-J(t))$, $u$ is convex.

If $g\notin C^{2}(\mathbb{R})$, then the result follows by approximation
(convolve $\mu$ with a Gaussian).
\end{pf}

\begin{lemma}
\label{little inequality}If $T\geq1$ and $x>2T\log T$, then $(\log
x)/x<T^{-1}$.\vadjust{\goodbreak}
\end{lemma}

\begin{pf}
Since the function $y=e^{-1}x$ is tangent to the strictly concave
function $%
y=\log x$, the function $y=(\log x)/x$ has a global maximum of $e^{-1}$ and
is decreasing on $[e,\infty)$. We now consider two cases. In case 1, $T<e$
and therefore $(\log x)/x\leq e^{-1}<T^{-1}$. In case 2, $T\geq e$.
Since $%
(\log T)/T<2^{-1}$, it follows that $\log(2\log T)<\log T$. For
$x^{\prime
}=2T\log T$,%
\[
\frac{\log x^{\prime}}{x^{\prime}}=\frac{\log T+\log(2\log
T)}{2T\log T}<%
\frac{1}{T}.
\]
Since $x>x^{\prime}>e$, $(\log x)/x<(\log x^{\prime})/x^{\prime}$
and the
result follows.
\end{pf}

The following lemma is a quantitative version of the Gnedenko law of large
numbers for log-concave probability measures on $\mathbb{R}$.

\begin{lemma}
\label{1 D Gnedenko}Let $q\geq1$ and $n\geq120q^{2}(2+\log q)^{2}$.
Let $%
\mu$ be a probability measure on $\mathbb{R}$ with a nonvanishing
log-concave density function and cumulative distribution function $J$, and
let $(\gamma_{i})_{1}^{n}$ be an i.i.d. sample from $\mu$. With
probability at least $1-2(\log n)^{-q}$,%
%
\begin{equation}
\frac{\llvert \gamma_{(n)}-J^{-1}(1-1/n)\rrvert }{J^{-1}(1-1/n)-
\mathbb{E}\mu}\leq6q\frac{\log\log n}{\log n}, \label{log concave gnedenko}
\end{equation}
where $\gamma_{(n)}=\max\{\gamma_{i}\}_{1}^{n}$ and $\mathbb{E}\mu$
denotes the centroid of $\mu$.
\end{lemma}

\begin{pf}
We shall implicitly make use of Lemma~\ref{little inequality} several times
throughout the proof. Let $a=(\log n)^{-q}$ and $b=q\log n$. It follows that
$0<a<b<ne^{-1}$. Set $s=J^{-1}(1-b/n)$ and $t=J^{-1}(1-a/n)$. As mentioned
in the preliminaries (see also Lemma 3.3 in~\cite{Bo}), $1-J(\mathbb
{E}\mu
)\geq e^{-1}$, hence $u(\mathbb{E}\mu)\leq1$.  Since $b/n<e^{-1}$, we
have $\mathbb{E}\mu<s<t$. By convexity of $u$ we have the inequality
$(s-%
\mathbb{E}\mu)^{-1}(u(s)-u(\mathbb{E}\mu))\leq(t-s)^{-1}(u(t)-u(s))$
which can be rewritten as%
%
\begin{equation}
\frac{J^{-1}(1-a/n)-J^{-1}(1-b/n)}{J^{-1}(1-b/n)-\mathbb{E}\mu}\leq \frac{%
\log b-\log a}{\log n-\log b-1} \label{1 d bound}.
\end{equation}
Since $2qe\log\sqrt{n}\leq\sqrt{n}$, it follows that $\log(qe\log
n)\leq
\frac{1}{2}\log n$ which implies that%
\[
\frac{\log b-\log a}{\log n-\log b-1}\leq\frac{3q\log\log n}{\log
n-\log
(qe\log n)}\leq6q\frac{\log\log n}{\log n}.
\]
By independence,%
\begin{eqnarray*}
&&\mathbb{P}\bigl\{J^{-1}(1-b/n) \leq\gamma_{(n)}\leq
J^{-1}(1-a/n)\bigr\}
\\
&&\qquad= \biggl( 1-\frac{a}{n} \biggr) ^{n}- \biggl( 1-
\frac{b}{n} \biggr) ^{n}
\\
&&\qquad\geq 1-a-e^{-b}
\\
&&\qquad \geq 1-2(\log n)^{-q}.
\end{eqnarray*}
If the event $\{J^{-1}(1-b/n)\leq\gamma_{(n)}\leq J^{-1}(1-a/n)\}$ occurs,
then the event defined by inequality (\ref{log concave gnedenko}) also
occurs.
\end{pf}

Although Lemma~\ref{1 D Gnedenko} applies to the multiplicative version of
the Gnedenko law of large numbers, it also recovers the additive
version as
long as%
%
\begin{equation}
J^{-1}(1-1/n)=o \biggl( \frac{\log n}{\log\log n} \biggr). \label{needed}
\end{equation}
If, in the proof, we take $a^{-1}=b=\log_{(m)}n$ (the $m${th} iterate of
the logarithm), then the probability bound becomes $1-2(\log_{(m)}n)^{-1}$,
and the right-hand side of (\ref{log concave gnedenko}) becomes%
\[
\frac{4\log_{(m+1)}n}{\log n}
\]
provided $n>n_{0}(m)$.

\section{Main proofs}
Since Lebesgue measure depends on the underlying Euclidean structure of
$%
\mathbb{R}^{d}$, so does the definition of $f=d\mu/dx$, and therefore also
the definition of $D_{\delta}=\operatorname{Cl}\{x\dvtx f(x)\geq\delta\}$. A natural
variation of the body $D_{\delta}$ which does not depend on Euclidean
structure is the body%
\[
D_{\delta}^{\natural}=\operatorname{Cl}\bigl\{x\in\mathbb{R}^{d}\dvtx f(x)
\geq\tau _{d}^{-1}9^{d}\bigl|\det \operatorname{cov}(\mu)\bigr|^{-1/2}
\delta\bigr\},
\]
where the quantity%
%
\begin{equation}
\tau_{d}=\operatorname{vol}_{d-1}\bigl(B_{2}^{d-1}
\bigr)\int_{1/2}^{1}\bigl(1-t^{2}
\bigr)^{(d-1)/2}\,dt \label{tau def}
\end{equation}
represents the volume of the set $\{x\in\mathbb{R}^{d}\dvtx \Vert x\Vert _{2}\leq
1$, $%
x_{1}\geq1/2\}$. Associated to $D_{\delta}^{\natural}$ are three
ellipsoids that play a central role in our proof. The John ellipsoid of
$%
D_{\delta}^{\natural}$ is denoted $\mathcal{E}_{D_{\delta}^{\natural}}$
and the centroid of $\mathcal{E}_{D_{\delta}^{\natural}}$ will be
denoted $%
\mathcal{O}_{\delta}$. We also consider%
%
\begin{equation}
\mathcal{E}_{\delta}^{\sharp}=3d(\mathcal{E}_{D_{\delta}^{\natural
}}-%
\mathcal{O}_{\delta})+\mathcal{O}_{\delta} \label{sharp ellipsoid}
\end{equation}
and%
%
\begin{equation}
\mathcal{E}_{\delta}^{\flat}=\tfrac{1}{2}(
\mathcal{E}_{D_{\delta
}^{\natural}}-\mathcal{O}_{\delta})+\mathcal{O}_{\delta}
\label{flat ellipsoid}.
\end{equation}
The advantage of using $D_{\delta}^{\natural}$ is that we may place
$\mu$
in different positions at various stages of our analysis. We first
position $%
\mu$ to be isotropic and then position it so that $\mathcal
{E}_{D_{\delta
}^{\natural}}=B_{2}^{d}$. We include the proofs of Lemmas~\ref{basic
positioning} and~\ref{epsilon net positioning} in Section~\ref{tech}.

\begin{lemma}
\label{basic positioning}There exists a universal constant $c>0$ with the
following property. Let $d\in\mathbb{N}$, let $\mu$ be a log-concave
probability measure with a continuous density function $f$, and let
$\delta
<c\exp(-8d^{2}\log d)$. Let $\mathfrak{H}$ be a half-space (either\vadjust{\goodbreak}
open or
closed) with $\mu(\mathfrak{H})=\delta$, and let $\mathcal{E}_{\delta
}^{\sharp}$ and $\mathcal{E}_{\delta}^{\flat}$ be defined by (\ref{sharp
ellipsoid}) and (\ref{flat ellipsoid}), respectively. Then%
%
\begin{eqnarray}
\mathfrak{H}\cap\mathcal{E}_{\delta}^{\sharp} &\neq&\varnothing,
\label{upper bound for F}
\\
\mathfrak{H}\cap\mathcal{E}_{\delta}^{\flat} &=&\varnothing
\label{lower bound for F}.
\end{eqnarray}
Consequently,%
\[
\mathcal{E}_{\delta}^{\flat}\subset F_{\delta}\subset
\mathcal {E}_{\delta
}^{\sharp}.
\]
\end{lemma}

We shall use the Euclidean structure corresponding to $\mathcal{E}%
_{D_{\delta}^{\natural}}$ in order to compare $F_{1/n}$ and $P_{n}$. The
following lemma together with Lemma~\ref{basic positioning} allows us
to do
so.

\begin{lemma}
\label{epsilon net positioning}Let $d\in\mathbb{N}$ and let $K$ and
$L$ be
convex bodies in $\mathbb{R}^{d}$ such that $0\in \operatorname{int}(L)$ and $%
rB_{2}^{d}\subset K\subset RB_{2}^{d}$ for some $r,R>0$. Let $0<\rho<1/2$
and $0<\varepsilon<(16R/r)^{-1}$, and let $\mathcal{N}$ be an
$\varepsilon$%
-net in $S^{d-1}$. Suppose that for each $\omega\in\mathcal{N}$,%
%
\begin{equation}
(1-\rho)\Vert \omega\Vert _{L}\leq\Vert \omega\Vert _{K}
\leq(1+\rho)\Vert \omega\Vert _{L} \label{tight KL bound in net}.
\end{equation}
Then for all $x\in\mathbb{R}^{d}$ we have%
%
\begin{equation}
\bigl(1+2\rho+28Rr^{-1}\varepsilon\bigr)^{-1}\Vert x\Vert
_{L}\leq\Vert x\Vert _{K}\leq \bigl(1+2\rho
+28Rr^{-1}\varepsilon\bigr)\Vert x\Vert _{L}. \label{norm
approx}
\end{equation}
In particular,%
%
\begin{equation}
d_{\mathfrak{L}}(K,L)\leq d_{\mathfrak{L}}(K,L,0)\leq1+2\rho
+28Rr^{-1}\varepsilon. \label{geometric distance bound}
\end{equation}
\end{lemma}

\begin{pf*}{Proof of Theorem~\ref{main 1}}
By convolving $\mu$ with a Gaussian measure of the form%
\[
\phi_{\lambda,d}(x)=\lambda^{-d}\phi_{d}\bigl(
\lambda^{-1}x\bigr),
\]
where $\phi_{d}(x)=(2\pi)^{-d}\exp(-2^{-1}\Vert x\Vert _{2}^{2})$ is the standard
normal density function, and taking $\lambda\rightarrow0$, we may assume
that the density of $\mu$ is continuous and nonvanishing. This is possible
because the bounds in the theorem do not depend on $\mu$. The
condition $%
n\geq c\exp\exp(5d)+c^{\prime}\exp(q^{3})$ (with sufficiently large $c$
and $c^{\prime}$) insures that the probability bound is nontrivial.
It is
also sufficiently large so that we may use Lemma~\ref{basic positioning}
with $\delta=1/n$ and Lemma~\ref{1 D Gnedenko} with $q^{\prime}=d+q$. In
fact we will implicitly make use of this bound repeatedly throughout the
proof. Let $\varepsilon=(\log n)^{-1}$.  By applying a suitable affine
transformation, we may assume that $\mathcal{E}_{D_{1/n}^{\natural
}}=B_{2}^{d}$. By Lemma~\ref{basic positioning}, if $\mathfrak
{H}_{\theta,t} $ is a half-space with $\mu(\mathfrak{H}_{\theta,t})=1/n$, then%
%
\begin{equation}
1/2\leq t\leq3d. \label{depth}
\end{equation}
This implies that $1/2B_{2}^{d}\subset F_{1/n}\subset3 dB_{2}^{d}$. For each
$\theta\in S^{d-1}$, the function $f_{\theta}(t)=-\frac{d}{dt}\mu(%
\mathfrak{H}_{\theta,t})$ is the density of a log-concave probability
measure $\mu_{\theta}$ on $\mathbb{R}$ with cumulative distribution
function $J_{\theta}(t)=1-\mu(\mathfrak{H}_{\theta,t})$.  Furthermore,
the sequence $( \langle\theta,x_{i} \rangle)_{i=1}^{n}$ is an
i.i.d. sample from this distribution.\vadjust{\goodbreak}  Recalling the definition of the dual
Minkowski functional, for any $y\in\mathbb{R}^{d}$,%
\begin{eqnarray*}
\Vert y\Vert _{P_{n}^{\circ}} &=&\sup\bigl\{ \langle x,y \rangle\dvtx x\in
P_{n}\bigr\}
\\
&=&\max_{i=1,\ldots, n} \langle x_{i},y \rangle.
\end{eqnarray*}
We use this notation even when $0\notin \operatorname{int}(P_{n})$.  Let $\mathcal{N}$
denote a generic $\varepsilon$-net in $S^{d-1}$, and consider the
function%
\[
\widetilde{f}_{\mathcal{N}}(x)=\inf\bigl\{\mu(\mathfrak{H}_{\omega,t})
\dvtx \omega \in\mathcal{N}, t= \langle\omega,x \rangle\bigr\}.
\]
For all $\delta>0$, define the discrete floating body%
\[
F_{\delta}^{\mathcal{N}}=\bigl\{x\in\mathbb{R}^{d}\dvtx
\widetilde{f}_{\mathcal
{N}%
}(x)\geq\delta\bigr\}.
\]
Note that $\widetilde{f}(x)=\inf_{\mathcal{N}}\widetilde{f}_{\mathcal{N}}(x)$
and $F_{\delta}=\bigcap_{\mathcal{N}}F_{\delta}^{\mathcal{N}}$, where $%
\mathcal{N}$ runs through the collection of all $\varepsilon$-nets in $
S^{d-1}$.  By (\ref{depth}), $\frac{1}{2}B_{2}^{d}\subset
F_{1/n}^{\mathcal{%
N}}\subset3 dB_{\mathcal{N}}$, and by (\ref{geometric net approx}) we
have $%
1/2B_{2}^{d}\subset F_{1/n}^{\mathcal{N}}\subset4 dB_{2}^{d}$ which implies
that $(4d)^{-1}B_{2}^{d}\subset(F_{1/n}^{\mathcal{N}})^{\circ}\subset
2B_{2}^{d}$.  For each $\theta\in S^{d-1}$, we have%
\begin{eqnarray*}
\mathbb{E}\mu_{\theta} &\geq&J_{\theta}^{-1}
\bigl(e^{-1}\bigr)
\\
&\geq&J_{\theta}^{-1}(1/n).
\end{eqnarray*}
Combining this and (\ref{log concave gnedenko}), with probability at
least $%
1-2(\log n)^{-d-q}$ we have that%
\[
\frac{\llvert \Vert \theta\Vert _{P_{n}^{\circ}}-J_{\theta
}^{-1}(1-1/n)\rrvert }{J_{\theta}^{-1}(1-1/n)-J_{\theta
}^{-1}(1/n)}%
\leq6(d+q)\frac{\log\log n}{\log n}.
\]
Since both $-J_{\theta}^{-1}(1/n)$ and $J_{\theta}^{-1}(1-1/n)$ lie
in the
interval $[1/2,3d]$, both have roughly the same order of magnitude, and we
have
\[
(1-\rho)J_{\theta}^{-1}(1-1/n)\leq\Vert \theta\Vert
_{P_{n}^{\circ}}\leq (1+\rho)J_{\theta}^{-1}(1-1/n),
\]
where
\[
\rho=42d(d+q)\frac{\log\log n}{\log n}<1/8.
\]
With probability at least $1-\varepsilon^{-d}3^{d+1}(\log
n)^{-d-q}=1-3^{d+1}(\log n)^{-q}$, this holds for all $\omega\in
\mathcal{N}
$.  Hence,
\[
(1+\rho)^{-1}P_{n}\subset F_{1/n}^{\mathcal{N}},
\]
which implies that%
\[
(1-\rho)\Vert \theta\Vert _{P_{n}^{\circ}}\leq\Vert \theta \Vert
_{(F_{1/n}^{\mathcal{N}%
})^{\circ}}
\]
for all $\theta\in S^{d-1}$. On the other hand, for all $\omega\in
\mathcal{N}$ we have%
%
\begin{eqnarray}\label{noname q eqn}
\Vert \omega\Vert _{P_{n}^{\circ}} &\geq&(1-\rho)J_{\omega}^{-1}(1-1/n)
\nonumber
\\[-8pt]
\\[-8pt]
\nonumber
&\geq&(1-\rho)\Vert \omega\Vert _{(F_{1/n}^{\mathcal{N}})^{\circ}}
\end{eqnarray}
As $\Vert \cdot\Vert _{P_{n}^{\circ}}$ is the supremum of Lipschitz
functions, $%
\operatorname{Lip}(\Vert \cdot\Vert _{P_{n}^{\circ}})\leq\sup\{\Vert x\Vert _{2}\dvtx\break x\in P_{n}\}<8d$.
Since (\ref{noname q eqn}) implies that $\Vert \omega\Vert _{P_{n}^{\circ}}>1/4$
(simultaneously for all $\omega\in\mathcal{N}$ with high
probability), for
all $\theta\in S^{d-1}$ we have $\Vert \theta\Vert _{P_{n}^{\circ}}>1/8$. Using
the Hahn--Banach theorem, $0\in \operatorname{int}(P_{n})$. This also implies that
$0\in
\operatorname{int}(P_{n}^{\circ})$. By~(\ref{norm approx}),%
%
\begin{equation}
(1+4\rho+224 d\varepsilon)^{-1}\Vert x\Vert _{P_{n}^{\circ}}\leq \Vert x
\Vert _{(F_{1/n}^{%
\mathcal{N}})^{\circ}}\leq(1+4\rho+224 d\varepsilon )\Vert x\Vert _{P_{n}^{\circ}}
\label{first net approx}
\end{equation}
for all $x\in\mathbb{R}^{d}$.  Let $\mathcal{M}$ be any other
$\varepsilon
$-net in $S^{d-1}$. By the calculations above, with probability at
least $%
1-3^{d+1}(\log n)^{-q}$,%
%
\begin{equation}
(1+4\rho+224 d\varepsilon)^{-1}\Vert x\Vert _{P_{n}^{\circ}}\leq \Vert x
\Vert _{(F_{1/n}^{%
\mathcal{M}})^{\circ}}\leq(1+4\rho+224 d\varepsilon )\Vert x\Vert _{P_{n}^{\circ}}
\label{second net approx}
\end{equation}
for all $x\in\mathbb{R}^{d}$. By the union bound, with probability at least
$1-\break3^{d+2}(\log n)^{-q}>0$, both (\ref{first net approx}) and (\ref{second
net approx}) hold, which implies that%
%
\begin{equation}
(1+4\rho+224 d\varepsilon)^{-2}F_{1/n}^{\mathcal{N}}\subset
F_{1/n}^{%
\mathcal{M}}\subset(1+4\rho+224 d\varepsilon)^{2}F_{1/n}^{\mathcal{N}}.
\label{third net approx}
\end{equation}
However both $F_{1/n}^{\mathcal{N}}$ and $F_{1/n}^{\mathcal{M}}$ are
deterministic bodies, and (\ref{third net approx}) therefore holds
unconditionally. Since $F_{1/n}=\bigcap_{\mathcal{M}}F_{1/n}^{\mathcal{M}}$,
where the intersection is taken over all $\varepsilon$-nets in $S^{d-1}$,
we have%
\[
(1+4\rho+224 d\varepsilon)^{-2}F_{1/n}^{\mathcal{N}}\subset
F_{1/n}\subset (1+4\rho+224 d\varepsilon)^{2}F_{1/n}^{\mathcal{N}}.
\]
Combining this with the polar of (\ref{first net approx}) gives that with
probability at least $1-3^{d+1}(\log n)^{-q}$, we have%
\[
(1+4\rho+224 d\varepsilon)^{-3}P_{n}\subset F_{1/n}
\subset(1+4\rho +224 d\varepsilon)^{3}P_{n}
\]
from which the result follows by the inequality $(1+\varepsilon^{\prime
})^{3}\leq1+12\varepsilon^{\prime}$, valid if $0\leq\varepsilon
^{\prime
}\leq1$.
\end{pf*}

\begin{lemma}
Let $g\dvtx \mathbb{R}^{d}\rightarrow\lbrack0,\infty]$ be convex with $%
\lim_{x\rightarrow\infty}g(x)=\infty$, let $K\subset\mathbb{R}^{d}$
be a
convex body containing $0$ in its interior and let $p>1$.  Then there
exist $c_{1},c_{2}>0$ such that for all $x\in\mathbb{R}^{d}$,
%
\begin{equation}
g(x)^{p}\geq c_{1}\Vert x\Vert _{K}^{p}-c_{2}.
\label{domination}
\end{equation}
\end{lemma}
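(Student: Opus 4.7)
The plan is to first establish the linear cone bound
\begin{equation*}
g(x) \geq A||x||_{K} - B
\end{equation*}
for some constants $A, B > 0$, i.e.\ the $p=1$ version of (\ref{domination}), and then bootstrap to arbitrary $p > 1$ by a two-case split on $||x||_{K}$. This is the natural analogue of how (\ref{domination of cone}) was obtained in the log-concave setting, with the Minkowski gauge $||\cdot||_{K}$ replacing the Euclidean norm.

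For the linear bound I would argue directly from convexity and coercivity, without appealing to (\ref{domination of cone}) since $g$ is not assumed here to arise as $-\log f$. The sublevel set $U = \{g \leq g(0)+1\}$ is convex and bounded (bounded because $g(x) \to \infty$), so $R_{0} := \sup_{z \in U} ||z||_{K}$ is finite. Set $R = R_{0} + 1$. Given any $x$ with $||x||_{K} \geq R$, put $\lambda = R/||x||_{K} \in (0,1]$ and $y = \lambda x$; then $||y||_{K} = R > R_{0}$ forces $y \notin U$, so $g(y) > g(0) + 1$. Convexity applied to $y = \lambda x + (1-\lambda)\cdot 0$ then yields
\begin{equation*}
g(x) \geq g(0) + (1/\lambda)\bigl(g(y) - g(0)\bigr) \geq ||x||_{K}/R,
\end{equation*}
and for $||x||_{K} < R$ the inequality $g(x) \geq ||x||_{K}/R - 1$ is automatic from $g \geq 0$. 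So the linear bound holds with $A = 1/R$ and $B = 1$.

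With the linear bound in hand, the bootstrap to the $p$-th power is a one-line split. When $||x||_{K} \geq 2B/A$, the linear estimate gives $g(x) \geq (A/2)||x||_{K} \geq 0$, and raising to the $p$-th power yields $g(x)^{p} \geq (A/2)^{p}||x||_{K}^{p}$. When $||x||_{K} < 2B/A$, the target quantity $c_{1}||x||_{K}^{p} - c_{2}$ is already negative for $c_{1} = (A/2)^{p}$ and $c_{2} = B^{p}$, so the inequality is trivial from $g \geq 0$. These choices of $c_{1}, c_{2}$ yield (\ref{domination}).

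There is no substantive obstacle: the only step that requires any real argument is the linear cone bound, which is a standard consequence of convexity and coercivity. The hypothesis $p > 1$ plays no role in the proof (the argument works for every $p > 0$) and appears to be dictated by how the lemma will be used later in the paper rather than by the proof itself.
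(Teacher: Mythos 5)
Your proof is correct. The paper in fact gives no argument here (it says only ``we leave the easy proof of this to the reader''), and your two-step argument -- the linear cone bound $g(x)\geq \|x\|_{K}/R-1$ via convexity and boundedness of the sublevel set $\{g\leq g(0)+1\}$, followed by the case split on $\|x\|_{K}\gtrless 2B/A$ to pass to the $p$-th power -- is exactly the standard elementary proof the author had in mind; your observation that $p>1$ is not needed for the argument is also accurate.
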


\begin{pf}
We leave the easy proof of this to the reader.
\end{pf}

\begin{lemma}
\label{lem tail decay}Let~$p>1$, $d\in\mathbb{N}$ and let $\mu$ be a
$p$%
-log-concave probability measure on $\mathbb{R}^{d}$.  Then there
exist $%
c_{1},c_{2},t_{0}>0$ such that for\vadjust{\goodbreak} all $\theta\in S^{d-1}$ and all
$t\geq
t_{0}$,%
%
\begin{equation}
\mu(\mathfrak{H}_{\theta,t})\leq c_{1}t^{1-p}e^{-c_{2}t^{p}},
\label{tail decay}
\end{equation}
where $\mathfrak{H}_{\theta,t}=\{x\in\mathbb{R}^{d}\dvtx  \langle
x,\theta
\rangle\geq t\}$.
\end{lemma}

\begin{pf}
For all $t\geq1$ we have%
\begin{eqnarray*}
e^{-t^{p}} &\leq&-\frac{d}{dt} \bigl( p^{-1}t^{1-p}e^{-t^{p}}
\bigr)
\\
&=&p^{-1}(p-1)t^{-p}e^{-t^{p}}+e^{-t^{p}}
\\
&\leq&p^{-1}(2p-1)e^{-t^{p}}.
\end{eqnarray*}
Hence, by the fundamental theorem of calculus,%
%
\begin{equation}
(2p-1)^{-1}t^{1-p}e^{-t^{p}}\leq\int
_{t}^{\infty}e^{-s^{p}}\,ds\leq
p^{-1}t^{1-p}e^{-t^{p}}. \label{sandwitch}
\end{equation}
Since the image of a $p$-log-concave probability measure under an orthogonal
transformation is $p$-log-concave, we may assume without loss of generality
that $\theta=e_{1}=(1,0,0,0,\ldots)$. By (\ref{domination}), there
exist $%
c_{1},c_{2}>0$ such that for all $x\in\mathbb{R}^{d}$,
\[
f(x)\leq c_{1}e^{-c_{2}\Vert x\Vert _{p}^{p}},
\]
where $\Vert x\Vert _{p}^{p}=\sum_{i=1}^{d}|x_{i}|^{p}$. Hence,%
%
\begin{eqnarray}\label{tail bound}
\mu(\mathfrak{H}_{\theta,t}) &\leq&\int_{\mathfrak{H}_{\theta,t}}c_{1}e^{-c_{2}\Vert x\Vert _{p}^{p}}\,dx
\nonumber
\\[-8pt]
\\[-8pt]
\nonumber
&=&\int_{t}^{\infty}c_{3}e^{-c_{2}s^{p}}\,ds.
\end{eqnarray}
The result now follows from a change of variables, (\ref{tail bound})
and (%
\ref{sandwitch}).
\end{pf}

\begin{pf*}{Proof of Theorem~\ref{main 2}}
Let $c_{1}$, $c_{2}$ and $t_{0}$ be the constants appearing in Lemma~\ref%
{lem tail decay}. Let $n_{0}>c_{1}+\exp(2^{-1}c_{2}t_{0}^{p})$. Without
loss of generality, $t_{0}>1$ and $n>n_{0}$. Set $\alpha
=(2c_{2}^{-1}\log
n)^{1/p}$ and consider any $x\in\mathbb{R}^{d}$ with $\Vert x\Vert _{2}>\alpha$.
Let $\theta=\Vert x\Vert _{2}^{-1}x$ and $t=(\alpha+\Vert x\Vert _{2})/2$. Since
$t>\alpha
>t_{0}$ and $n>c_{1}$, Lemma~\ref{lem tail decay} implies that%
\[
\mu(\mathfrak{H}_{\theta,t})<c_{1}n^{-2}<n^{-1}.
\]
Since $\Vert x\Vert _{2}>t$, $x\in \operatorname{int}(\mathfrak{H}_{\theta,t})$. By
definition of
the floating body, $x\notin F_{1/n}$. Since this is true for all such
$x$, $%
\operatorname{diam}(F_{1/n})\leq2\alpha=c_{4}(\log n)^{1/p}$. The result now follows from
Theorem~\ref{main 1} and relation (\ref{HL}) between the Hausdorff and
the logarithmic Hausdorff distances.
\end{pf*}

\section{Technical lemmas}\label{tech}

This section contains some technical results on the rigidity of log-concave
functions that enable us to obtain a lower bound on the sample size.\vadjust{\goodbreak}

\begin{lemma}
\label{vol ball}There exist universal constants $c_{1},c_{2}>0$ such that
for all $d\in\mathbb{N}$,%
\[
c_{1}^{d}d^{-d/2}\leq\operatorname{vol}_{d}
\bigl(B_{2}^{d}\bigr)\leq c_{2}^{d}d^{-d/2}.
\]
\end{lemma}

\begin{pf}
This follows from Stirling's formula and the expression $\operatorname{vol}%
_{d}(B_{2}^{d})=\pi^{d/2}(\Gamma(1+d/2))^{-1}$; see Corollary 2.20 in
\cite%
{Ko} or page 11 in~\cite{Pisier}.
\end{pf}

\begin{lemma}
\label{density decay}There exists a universal constant $c>0$ with the
following property. Let $d\in\mathbb{N}$ and let $\mu$ be an isotropic
log-concave probability measure on $\mathbb{R}^{d}$ with density
function $%
f $. For all $x\in\mathbb{R}^{d}$,
\[
f(x)\leq e^{-\alpha_{d}\Vert x\Vert _{2}+\beta_{d}},
\]
where $\alpha_{d}=c^{d}d^{-d/2}$ and $\beta_{d}=10d\log(d)+7$.
\end{lemma}

\begin{pf}
We first consider the case $d\geq2$. The volume of a cone in $\mathbb
{R}%
^{d} $ with height $h$ and base radius $r$ is $d^{-1}r^{d-1}h\mathrm
{vol}%
_{d-1}(B_{2}^{d-1})$. For any $x\in\mathbb{R}^{d}$, let $A_{x}$ be the cone
with vertex $x$ and base $(1/9)B_{2}^{d}\cap x^{\bot}$. Then $\mathrm
{vol}%
_{d}(A_{x})=d^{-1}9^{-d+1}\Vert x\Vert _{2}\operatorname{vol}%
_{d-1}(B_{2}^{d-1})>e^{-4d+3}\Vert x\Vert _{2}\operatorname{vol}_{d-1}(B_{2}^{d-1})$. By
log-concavity of $f$ and inequality (\ref{rigid 2}), for all $y\in
A_{x}$,%
%
\begin{equation}
f(y)\geq\min\bigl\{f(x),2^{-8d}\bigr\} \label{mango}.
\end{equation}
If $f(x)\geq2^{-8d}$, then%
\[
1\geq\int_{A_{x}}f(y)\,dy\geq2^{-8d}
\operatorname{vol}%
_{d}(A_{x})>e^{-10d+3}\Vert x
\Vert _{2}\operatorname{vol}_{d-1}\bigl(B_{2}^{d-1}
\bigr),
\]
and it follows that%
\[
\Vert x\Vert _{2}<\frac{e^{10d-3}}{\operatorname{vol}_{d-1}(B_{2}^{d-1})}.
\]
Hence, if $\Vert x\Vert _{2}\geq e^{10d-3}/\operatorname{vol}_{d-1}(B_{2}^{d-1})$,
then%
%
\begin{equation}
f(x)<2^{-8d} \label{f small}.
\end{equation}
For any such $x$ we have the convex combination%
\[
\frac{e^{10d-3}}{\operatorname{vol}_{d-1}(B_{2}^{d-1})}\frac
{x}{\Vert x\Vert _{2}}=\frac{%
e^{10d-3}}{\Vert x\Vert _{2}\operatorname{vol}_{d-1}(B_{2}^{d-1})}x+ \biggl( 1-
\frac{%
e^{10d-3}}{\Vert x\Vert _{2}\operatorname{vol}_{d-1}(B_{2}^{d-1})} \biggr) 0.
\]
Set%
\[
\widetilde{x}=\frac{e^{10d-3}}{\operatorname{vol}_{d-1}(B_{2}^{d-1})}\frac{x}{
\Vert x\Vert _{2}}.
\]
Using concavity of $\log f$ and inequality (\ref{f small}),%
\[
-8d\ln2\geq \biggl( \frac{e^{10d-3}}{\Vert x\Vert _{2}\mathrm
{vol}_{d-1}(B_{2}^{d-1})%
} \biggr) \log f(x)+ \biggl( 1-
\frac{e^{10d-3}}{\Vert x\Vert _{2}\operatorname{vol}%
_{d-1}(B_{2}^{d-1})} \biggr) \log f(0).\vadjust{\goodbreak}
\]
After some simplification, and using inequality (\ref{rigid 1}), we get%
\[
f(x)\leq\exp \bigl( -de^{-10d+3}\mathrm {vol}_{d-1}
\bigl(B_{2}^{d-1}\bigr)\Vert x\Vert _{2}\ln 2-7d
\ln2 \bigr).
\]
If, on the other hand, $\Vert x\Vert _{2}<e^{10d-3}/\mathrm
{vol}_{d-1}(B_{2}^{d-1})$%
, then by (\ref{rigid 3})%
\begin{eqnarray*}
f(x) &\leq&\Vert f\Vert _{\infty}\leq d^{d/2}2^{8d}
\\
&=&\exp\bigl(2^{-1}d\ln d+8d\ln2\bigr).
\end{eqnarray*}
Using Lemma~\ref{vol ball}, it follows that for all $x\in\mathbb
{R}^{d}$,%
\begin{eqnarray*}
f(x) &\leq&\exp \bigl( -de^{-10d+3}\operatorname{vol}_{d-1}
\bigl(B_{2}^{d-1}\bigr)\ln 2\Vert x\Vert _{2}+9d
\ln2+2^{-1}d\ln d \bigr)
\\
&\leq&\exp\bigl(-c_{3}^{d}d^{-d/2}\Vert x\Vert
_{2}+10d\ln d\bigr).
\end{eqnarray*}
The case $d=1$ is simpler, and we leave the details to the reader.
First show
that $f(2^{8})\leq2^{-8}$, and then proceed as in the case $d\geq2$ to
obtain $f(x)\leq\exp(-2^{-9}|x|+7)$ for all $x\in\mathbb{R}$.
\end{pf}

\begin{corollary}
\label{vol growth}There exist universal constants $c_{1},c_{2}>0$ with the
following property. Let $d\in\mathbb{N}$, and let $\mu$ be an absolutely
continuous isotropic log-concave probability measure. For all $\delta
<e^{-10d\log d-7}$,%
\[
D_{\delta}\subset c_{1}^{d}d^{d/2}\bigl(
\log\delta^{-1}\bigr)B_{2}^{d}.
\]
In particular, $\operatorname{vol}_{d}(D_{\delta})\leq c_{2}\exp(d^{2}\log
d)(\log\delta^{-1})^{d}$.
\end{corollary}

\begin{pf}
By (\ref{rigid 1}), $D_{\delta}\neq\varnothing$. By the bounds on
$\delta$, it follows that $10d\log d+7\leq\log\delta^{-1}$. The result now
follows from Lemmas~\ref{density decay} and~\ref{vol ball}.
\end{pf}

\begin{lemma}
\label{UDR}There exists a universal constant $c>0$ with the following
property. Let $d\in\mathbb{N}$, and let $\mu$ be an isotropic log-concave
probability measure with density $f$. Let $r>1$ and $x\in\mathbb{R}^{d}$.
If $f(x)<2^{-8d}$, then%
%
\begin{equation}
f(rx)\leq f(x)\exp\bigl(-c^{d}d^{-d/2}(r-1)\Vert x\Vert
_{2}\bigr). \label{uniform decay rate}
\end{equation}
\end{lemma}

\begin{pf}
Let $g=-\log f$. By Lemmas~\ref{density decay} and~\ref{vol ball},
there exists a universal constant $c_{2}>0$ such that $f(\widetilde
{x})\leq
2^{-8d}$ for all $\widetilde{x} $ with $\Vert \widetilde
{x}\Vert _{2}\geq
c_{2}^{d}d^{d/2}$; see in particular~(\ref{f small}). Let $x\in\mathbb
{R}%
^{d}$ be the point specified in the statement of the lemma. We consider two
cases. In the first case $\Vert x\Vert _{2}\geq c_{2}^{d}d^{d/2}$. Let
$\widetilde{x}%
=c_{2}^{d}d^{d/2}\Vert x\Vert _{2}^{-1}x$. By inequality (\ref{rigid 1}),
$f(0)\geq
2^{-7d}$. By convexity of $g$ and the definition of $c_{2}$,%
\[
\frac{g(rx)-g(x)}{(r-1)\Vert x\Vert _{2}}\geq\frac{g(\widetilde{x})-g(0)}{\Vert %
\widetilde{x}\Vert _{2}}=\Vert \widetilde{x}\Vert
_{2}^{-1}\ln\frac
{f(0)}{f(\widetilde{x%
})}\geq c_{2}^{-d}d^{1-d/2}
\ln2.
\]
In the second case, $\Vert x\Vert _{2}<c_{2}^{d}d^{d/2}$. Recall that, by
hypothesis, $f(x)<2^{-8d}$. Therefore,%
\[
\frac{g(rx)-g(x)}{(r-1)\Vert x\Vert _{2}}\geq\frac{g(x)-g(0)}{\Vert x\Vert _{2}}\geq \Vert x\Vert _{2}^{-1}
\ln\frac{f(0)}{f(x)}\geq c_{2}^{-d}d^{1-d/2}\ln(2)
\]
from which the result follows with $c=(2c_{2})^{-1}$.\vadjust{\goodbreak}
\end{pf}

\begin{lemma}
\label{mass outside contour}There exists a universal constant $c_{1}>0$ with
the following property. Let $d\in\mathbb{N}$, and let $\mu$ be an isotropic
log-concave probability measure with a continuous density function $f$. For
all $\delta<e^{-10d\log d-7}$,%
%
\begin{equation}
\mu\bigl(\mathbb{R}^{d}\setminus D_{\delta}\bigr)\leq
\alpha_{d}\delta\bigl(\log \delta^{-1}\bigr)^{d},
\label{ineq moc}
\end{equation}
where $\alpha_{d}=c_{1}\exp(3d^{2}\log d)$.
\end{lemma}

\begin{pf}
Since $f$ is continuous, for all $\theta\in\partial D_{\delta}$ we
have $%
f(\theta)=\delta$. By the polar integration formula (\ref{polar formula})
and inequality (\ref{uniform decay rate}),
\begin{eqnarray*}
\mu\bigl(\mathbb{R}^{d}\setminus D_{\delta}\bigr) &=&\int
_{\mathbb
{R}^{d}\setminus
D_{\delta}}f(x)\,dx
\\
&=&d\int_{1}^{\infty}\int_{\partial D_{\delta}}r^{d-1}f(r
\theta)\,d\mu _{D_{\delta}}(\theta)\,dr
\\
&\leq&d\int_{1}^{\infty}\int_{\partial D_{\delta}}r^{d-1}
\delta\exp \bigl(-c^{d}d^{-d/2}(r-1)\Vert \theta\Vert
_{2}\bigr)\,d\mu_{D_{\delta}}(\theta)\,dr.
\end{eqnarray*}
By (\ref{rigid 2}) and the fact that $\delta<2^{-8d}$, we have $%
1/9B_{2}^{d}\subset D_{\delta}$. By Corollary~\ref{vol growth},
\begin{eqnarray*}
\mu\bigl(\mathbb{R}^{d}\setminus D_{\delta}\bigr) &\leq&d\int
_{1}^{\infty
}\int_{\partial D_{\delta}}r^{d-1}
\delta\exp \bigl(-c^{d}d^{-d/2}(r-1)9^{-1}\bigr)\,d
\mu_{D_{\delta}}(\theta)\,dr
\\
&\leq&\delta\operatorname{vol}_{d}(D_{\delta})d\int
_{1}^{\infty
}r^{d-1}\exp \bigl(-c_{2}^{d}d^{-d/2}(r-1)
\bigr)\,dr
\\
&\leq&\beta_{d}\delta\bigl(\log\delta^{-1}
\bigr)^{d}d\exp\bigl(d^{2}\log d+c_{3}\bigr),
\end{eqnarray*}
where%
\[
\beta_{d}=\int_{1}^{\infty}r^{d-1}
\exp\bigl(-c_{2}^{d}d^{-d/2}(r-1)\bigr)\,dr.
\]
Set $\omega_{d}=c_{2}^{d}d^{-d/2}$ and $t=\omega_{d}r$. Recall the
definition of the gamma function $\Gamma(z)=\int_{0}^{\infty
}e^{-r}r^{z-1}\,dr$. By a change of variables and Stirling's formula,%
\begin{eqnarray*}
\beta_{d} &\leq&\exp(\omega_{d})\int_{0}^{\infty}r^{d-1}
\exp (-\omega _{d}r)\,dr
\\
&\leq&c_{4}\omega_{d}^{-d}\int
_{0}^{\infty}t^{d-1}e^{-t}\,dt
\\
&\leq&c_{5}\exp\bigl(d^{2}\log d\bigr)
\end{eqnarray*}
from which the result follows.
\end{pf}

Lemma~\ref{mass outside contour} is optimal in $\delta$ up to a factor
$%
\log\delta^{-1}$ as can be seen from the example $f(x)=\widetilde{c}%
_{d}\exp(-\Vert x\Vert _{2})$, in which case $\mu(\mathbb{R}^{d}\setminus
D_{\delta})\geq c_{d}\delta(\log\delta^{-1})^{d-1}$ for $\delta
<\delta
_{0}(d)$. To\vadjust{\goodbreak} see this, apply the polar integration formula as in the proof
of Lemma~\ref{mass outside contour} and use the equation%
\[
\int_{R}^{\infty}r^{d-1}e^{-r}\,dr=
\bigl(1+o_{d}(1)\bigr)R^{d-1}e^{-R}
\]
with $d$ fixed and $R\rightarrow\infty$.

\begin{lemma}
\label{log vs sqrt}There exists a universal constant $c>0$ such that
for all
$d\in\mathbb{N}$, if $t>d^{5d}$, then $\sqrt{t}\geq c(\log t)^{d}$.
\end{lemma}

Note that the inequality fails for $t=d^{2d}$.

\begin{pf*}{Proof of Lemma~\ref{log vs sqrt}}
First, consider any $d>16$. For any such $d$, $(2d)^{4d}<d^{5d}$. Set $T=2d$
and $x=\log t$. Since $(2d)^{4d}<d^{5d}<t$, it follows that $2T\log
T<x$. By
Lemma~\ref{little inequality}, $(\log x)/x<T^{-1}$, or equivalently%
\[
\frac{\log\log t}{\log t}<\frac{1}{2d},
\]
which is in turn equivalent to $\sqrt{t}>(\log t)^{d}$. By elementary
analysis, the number%
\[
c^{\prime}=\inf\bigl\{t^{1/2}(\log t)^{-d}\dvtx d
\leq16,t>d^{5d}\bigr\}
\]
is strictly positive. The result now follows for all $d\in\mathbb{N}$
with $%
c=\min\{c^{\prime},1\}$.
\end{pf*}

\begin{lemma}
\label{mass outside big contour}There exists a universal constant $%
\widetilde{c}>0$ with the following property. Let $d\in\mathbb{N}$ and
let $%
\mu$ be an isotropic log-concave probability measure with a continuous
density function $f$. For all $\delta<\widetilde{c}\exp(-8d^{2}\log
d)$,%
\[
\mu\bigl(\mathbb{R}^{d}\setminus2D_{\tau^{-1}9^{d}\delta}\bigr)<\delta,
\]
where $\tau=\tau_{d}=\operatorname{vol}_{d-1}(B_{2}^{d-1})%
\int_{1/2}^{1}(1-t^{2})^{(d-1)/2}\,dt$.
\end{lemma}

\begin{pf}
Consider the quantity $\alpha_{d}=c_{1}\exp(3d^{2}\log d)$ from Lemma
\ref%
{mass outside contour}. By concavity, $1-t^{2}\geq3(1-t)/2$ for all $%
1/2\leq t\leq3/4$. By a change of variables and Lemma~\ref{vol ball}, one
sees that $\tau>c_{2}^{d}d^{-d/2}$. Let $\kappa=\tau^{-1}9^{d}\delta$.
Consider any $y\in\partial(2D_{\kappa})$. Then $x=y/2\in\partial
D_{\kappa}$, and we have the convex combination $x=\frac{1}{2}0+\frac
{1}{2}y$. By log-concavity, $f(x)\geq f(0)^{1/2}f(y)^{1/2}$ and by inequality
(\ref%
{rigid 1}),%
\[
f(y)\leq\frac{f(x)^{2}}{f(0)}<2^{8d}\kappa^{2}
\]
and $y\notin D_{\varepsilon}$ with $\varepsilon=2^{8d}\kappa^{2}$. Since
this is true for all $y\in\partial(2D_{\kappa})$, $D_{\varepsilon
}\subset2D_{\kappa}$. For a sufficiently small choice of $\widetilde{c}$
(chosen independently of $d$), $\varepsilon<e^{-10d\log d-7}$. By
Lemma \ref%
{mass outside contour} and the inequality $e^{9d}\leq2^{8d}9^{2d}\leq
e^{10d}$,%
\begin{eqnarray*}
\mu\bigl(\mathbb{R}^{d}\setminus2D_{\kappa}\bigr) &\leq&\mu
\bigl(\mathbb{R}%
^{d}\setminus D_{\varepsilon}\bigr)\leq
\alpha_{d}\varepsilon\bigl(\log \varepsilon^{-1}
\bigr)^{d}
\\
&\leq&e^{10d}\alpha_{d}\tau^{-2}
\delta^{2}\bigl(2\log\delta^{-1}+\log \bigl(e^{-8d}
\tau^{2}\bigr)\bigr)^{d}
\\
&\leq&\delta\bigl(c^{-1}\delta^{1/2}\alpha_{d}2^{d}e^{10d}\tau^{-2}
\bigr)c\delta ^{1/2}\bigl(\log\delta^{-1}\bigr)^{d},
\end{eqnarray*}
where $c$ is the constant from Lemma~\ref{log vs sqrt}. By the bound imposed
on $\delta$, $c^{-1}\delta^{1/2}\alpha_{d}2^{d}e^{10d}\tau^{-2}<1$. The
result now
follows from Lemma~\ref{log vs sqrt}.
\end{pf}

Recall that $\mathcal{E}_{K}$ denotes the John ellipsoid of a convex
body $K$
and that $\mathcal{E}_{K}\subset K\subset d(\mathcal{E}_{K}-x_{0})+x_{0}$,
where $x_{0}$ is the center of $\mathcal{E}_{K}$.

\begin{lemma}
\label{little geometric}Let $K\subset\mathbb{R}^{d}$ be a convex body
with $%
0\in K$. Then $2K\subset3d(\mathcal{E}_{K}-x_{0})+x_{0}$.
\end{lemma}

\begin{pf}
By applying a suitable linear transformation, we may assume that
$\mathcal{E}%
_{K}=B_{2}^{d}+x_{0}$. Take any $x\in K$. Since $\max
\{\Vert x_{0}-x\Vert _{2},\Vert x_{0}\Vert _{2}\}\leq d$, it follows that $\Vert x\Vert _{2}\leq
\Vert x_{0}\Vert _{2}+\Vert x-x_{0}\Vert _{2}\leq2d$ and that $\Vert x_{0}-2x\Vert _{2}\leq
\Vert x_{0}-x\Vert _{2}+\Vert x-2x\Vert _{2}\leq3d$.
\end{pf}

\begin{lemma}
\label{ellipsoid disjoint}Let $\mathcal{E}$ be an ellipsoid with
centroid $%
\mathcal{O}$, and let $\mathfrak{H}$ be a half-space with $\mathrm
{vol}_{d}(%
\mathfrak{H}\cap\mathcal{E})\times\operatorname{vol}_{d}(B_{2}^{d})<\tau
_{d}%
\operatorname{vol}_{d}(\mathcal{E})$. Then $\mathfrak{H}$ and $\frac{1}{2}(%
\mathcal{E}-\mathcal{O})+\mathcal{O}$ are disjoint.
\end{lemma}

\begin{pf}
The truth of the lemma is invariant under affine transformations of~$%
\mathcal{E}$, and we may therefore assume that $\mathcal{E}=B_{2}^{d}$. The
result now follows from the definition of $\tau_{d}$ [see equation
(\ref%
{tau def})] and the fact that $\tau_{d}=\operatorname{vol}_{d}\{x\in\mathbb
{R}%
^{d}\dvtx \Vert x\Vert _{2}\leq1$, $x_{1}\geq1/2\}$.
\end{pf}

\begin{pf*}{Proof of Lemma \protect\ref{basic positioning}}
Consider $\tau=\tau_{d}$ defined by (\ref{tau def}). We may assume
that $%
\mu$ is in isotropic position, which implies that $D_{\delta
}^{\natural
}=D_{\tau^{-1}9^{d}\delta}$. Lemmas~\ref{mass outside big contour}
and~\ref{little geometric} together imply that $\mathfrak{H}\cap
\mathcal{E%
}_{\delta}^{\sharp}\neq\varnothing$. For each $x\in D_{\delta
}^{\natural
} $, $f(x)\geq\tau^{-1}9^{d}\delta$. Therefore $\delta\geq\mu(%
\mathfrak{H}\cap\mathcal{E}_{D_{\delta}^{\natural}})\geq\tau
^{-1}9^{d}\delta\operatorname{vol}_{d}(\mathfrak{H}\cap\mathcal
{E}_{D_{\delta
}^{\natural}})$ which implies that $\operatorname{vol}_{d}(\mathfrak{H}\cap
\mathcal{E}_{D_{\delta}^{\natural}})\leq\tau9^{-d}$. Since the density
function $f$ is continuous and $\tau^{-1}9^{d}\delta<2^{-8d}$,
inequality (%
\ref{rigid 2}) implies that $(9^{-1}+\kappa)B_{2}^{d}\subset D_{\delta
}^{\natural}$ for some $\kappa>0$. Since $\mathcal{E}_{D_{\delta
}^{\natural}}$ is the unique ellipsoid of maximal volume inside
$D_{\delta
}^{\natural}$, we have $\operatorname{vol}_{d}(\mathcal{E}_{D_{\delta
}^{\natural
}})>9^{-d}\operatorname{vol}_{d}(B_{2}^{d})$. From the definition of $\mathcal
{E}%
_{\delta}^{\flat}$ and Lemma~\ref{ellipsoid disjoint}, we see that $%
\mathfrak{H}\cap\mathcal{E}_{\delta}^{\flat}=\varnothing$. Finally, the
claim that $\mathcal{E}_{\delta}^{\flat}\subset F_{\delta}$ follows from
the definition of $F_{\delta}$ while the claim that $F_{\delta}\subset
\mathcal{E}_{\delta}^{\sharp}$ follows from the Hahn--Banach theorem
[any $%
x\notin\mathcal{E}_{\delta}^{\sharp}$ lies in an open half-space $%
\mathfrak{H}$ with $\mathfrak{H}\cap\mathcal{E}_{\delta}^{\sharp
}=\varnothing$ and therefore $\mu(\mathfrak{H})<\delta$ and $x\notin
F_{\delta}$].\vadjust{\goodbreak}~%
\end{pf*}

\begin{pf*}{Proof of Lemma \protect\ref{epsilon net positioning}}
Note that $1+\rho\leq(1-\rho)^{-1}\leq1+2\rho$ and $1-\rho\leq
(1+\rho
)^{-1}\leq1-\rho/2$, and the same inequalities hold for $\varepsilon$.
Since $rB_{2}^{d}\subset K\subset RB_{2}^{d}$, we have that%
\[
R^{-1}\Vert x\Vert _{2}\leq\Vert x\Vert _{K}\leq
r^{-1}\Vert x\Vert _{2}
\]
for all $x\in\mathbb{R}^{d}$. Combining this with (\ref{tight KL bound in
net}) gives%
\[
R^{-1}(1+\rho)^{-1}\leq\Vert \omega\Vert _{L}\leq
r^{-1}(1-\rho)^{-1}
\]
for all $\omega\in\mathcal{N}$. Consider any $\theta\in S^{d-1}$.
Let $%
\omega_{0}$ be the element of $\mathcal{N}$ that minimizes $\Vert \theta
-\omega_{0}\Vert _{2}$, and consider the series representation (\ref{series}).
By the triangle inequality,%
\[
\Vert \theta\Vert _{L}\leq r^{-1}(1-\rho)^{-1}(1-
\varepsilon)^{-1}.
\]
Hence $\Vert x\Vert _{L}\leq r^{-1}(1-\rho)^{-1}(1-\varepsilon
)^{-1}\Vert x\Vert _{2}$ for
all $x\in\mathbb{R}^{d}$.  Using the triangle inequality in a bit of a
different way,%
\begin{eqnarray*}
\Vert \theta\Vert _{L} &\geq&\Vert \omega_{0}\Vert
_{L}-\sum_{i=1}^{\infty
}\varepsilon
_{i}\Vert \omega_{i}\Vert _{L}
\\
&\geq&R^{-1}(1+\rho)^{-1}-r^{-1}\varepsilon(1-
\varepsilon )^{-1}(1-\rho )^{-1}
\\
&\geq&R^{-1}/2-4r^{-1}\varepsilon
\\
&=&R^{-1}\bigl(1-8Rr^{-1}\varepsilon\bigr)/2
\\
&\geq&(4R)^{-1},
\end{eqnarray*}
which holds since $8Rr^{-1}\varepsilon\leq1/2$.  Thus%
\begin{eqnarray*}
\Vert \theta\Vert _{L} &\leq&\Vert \omega_{0}\Vert
_{L}+\Vert \theta-\omega_{0}\Vert _{L}
\\
&\leq&(1-\rho)^{-1}\Vert \omega_{0}\Vert
_{K}+r^{-1}(1-\rho )^{-1}(1-\varepsilon
)^{-1}\varepsilon
\\
&\leq&(1-\rho)^{-1}\bigl(\Vert \theta\Vert _{K}+\Vert
\omega_{0}-\theta \Vert _{K}\bigr)+r^{-1}(1-
\rho)^{-1}(1-\varepsilon)^{-1}\varepsilon
\\
&\leq&(1-\rho)^{-1}\Vert \theta\Vert _{K}+r^{-1}(1-
\rho)^{-1}\varepsilon \bigl(1+(1-\varepsilon)^{-1}\bigr)
\\
&\leq&(1-\rho)^{-1}\Vert \theta\Vert _{K}+Rr^{-1}(1-
\rho)^{-1}\varepsilon \bigl(1+(1-\varepsilon)^{-1}\bigr)\Vert
\theta\Vert _{K}
\\
&\leq&(1+2\rho) \bigl(1+3Rr^{-1}\varepsilon\bigr)\Vert \theta\Vert
_{K}
\\
&\leq&\bigl(1+2\rho+6Rr^{-1}\varepsilon\bigr)\Vert \theta\Vert
_{K}.
\end{eqnarray*}
On the other hand,%
\begin{eqnarray*}
\Vert \theta\Vert _{K} &\leq&\Vert \omega_{0}\Vert
_{K}+\Vert \theta-\omega_{0}\Vert _{K}
\\
&\leq&(1+\rho)\Vert \omega_{0}\Vert _{L}+r^{-1}
\varepsilon
\\
&\leq&(1+\rho) \bigl(\Vert \theta\Vert _{L}+\Vert \omega_{0}-
\theta \Vert _{L}\bigr)+r^{-1}\varepsilon
\\
&\leq&(1+\rho)\Vert \theta\Vert _{L}+r^{-1}(1+\rho) (1-\rho
)^{-1}(1-\varepsilon )^{-1}\varepsilon+r^{-1}
\varepsilon
\\
&\leq&(1+\rho)\Vert \theta\Vert _{L}+7r^{-1}\varepsilon
\cdot4R\Vert \theta \Vert _{L}
\\
&\leq&\bigl(1+\rho+28Rr^{-1}\varepsilon\bigr)\Vert \theta\Vert
_{L}.
\end{eqnarray*}
The result follows by positive homogeneity.
\end{pf*}

\section{\texorpdfstring{Proof of Theorem \protect\ref{main 3}}{Proof of Theorem 3}}\label{other approximants}
Fix $\mu$ and $d$ as in the statement of Theorem~\ref{main 3}. Let~$f$ be
the density of $\mu$, and let $g=-\log f$. All variables in this section
depend on both $d$ and $\mu$.

\begin{lemma}
There exist $c,\varepsilon_{0}>0$ such that for all $\varepsilon\in
(0,\varepsilon_{0})$,%
%
\begin{equation}
\mu\bigl(\mathbb{R}^{d}\setminus D_{\varepsilon}\bigr)<c\varepsilon
\bigl( \log \varepsilon^{-1} \bigr) ^{d} \label{fundamental
function}.
\end{equation}
\end{lemma}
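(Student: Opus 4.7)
The plan is a standard split-at-the-optimal-cutoff argument, using the uniform exponential tail of $f$ provided by the cone domination recorded in the preliminaries.

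First, invoke (\ref{domination of cone}): there exist $m, c_0 > 0$ such that $g(x) \geq m\|x\|_2 - c_0$ for every $x \in \mathbb{R}^d$, equivalently $f(x) \leq e^{c_0} e^{-m\|x\|_2}$. Set $T = \log(\varepsilon^{-1})$ and $R = T/m$; for $\varepsilon$ small enough we have $R \geq 1$. Then decompose
\begin{equation*}
\zeta_{\mu}(\varepsilon) = \int_{\{f < \varepsilon\} \cap R B_{2}^{d}} f(x)\,dx + \int_{\{f < \varepsilon\} \setminus R B_{2}^{d}} f(x)\,dx .
\end{equation*}
The first (bounded) piece is at most $\varepsilon \cdot \mathrm{vol}_{d}(R B_{2}^{d}) = c_1 R^{d} \varepsilon = c_1' \varepsilon (\log \varepsilon^{-1})^{d}$. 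For the second (tail) piece, discard the restriction $f < \varepsilon$, apply the cone bound pointwise, and pass to polar coordinates:
\begin{equation*}
\int_{\|x\|_{2} > R} f(x)\,dx \leq c_{2} \int_{R}^{\infty} s^{d-1} e^{-m s}\,ds \leq c_{3} R^{d-1} e^{-m R} = c_{3} R^{d-1} \varepsilon ,
\end{equation*}
the middle inequality being a routine iteration of integration by parts, directly analogous to (\ref{sandwitch}). Combining both pieces gives $\zeta_{\mu}(\varepsilon) \leq c\, \varepsilon (\log \varepsilon^{-1})^{d}$ for every $\varepsilon \in (0, \varepsilon_{0})$, with $\varepsilon_{0}$ any number smaller than $e^{-m}$.

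The crux is the choice $R = m^{-1} \log \varepsilon^{-1}$: a larger $R$ inflates the inner-ball contribution, a smaller $R$ inflates the tail, and this $R$ balances the two so that both live at the order $\varepsilon(\log \varepsilon^{-1})^{d}$, up to constants depending only on $d$, $m$, and $c_0$. I do not anticipate any real obstacle; the inequality is in essence a quantitative reflection of the fact that a log-concave density with uniform exponential decay places all but an $\varepsilon (\log\varepsilon^{-1})^{d}$ fraction of its mass inside the level set $D_{\varepsilon}$, which in turn sits inside a Euclidean ball of radius $O(\log \varepsilon^{-1})$.
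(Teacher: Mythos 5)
Your proof is correct, and it takes a genuinely different (and more elementary) route than the paper. You split $\{f<\varepsilon\}$ at a Euclidean ball of radius $R=m^{-1}\log \varepsilon ^{-1}$: inside, the trivial pointwise bound $f<\varepsilon$ plus $\mathrm{vol}_{d}(RB_{2}^{d})\sim (\log \varepsilon ^{-1})^{d}$; outside, the uniform exponential envelope $f(x)\leq e^{c_{0}}e^{-m||x||_{2}}$ from (\ref{domination of cone}) and a one--dimensional tail estimate, which with this choice of $R$ gives a contribution of order $\varepsilon (\log \varepsilon ^{-1})^{d-1}$, i.e.\ lower order (your remark about ``balancing'' is slightly off --- the split is not tight, but that only helps). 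The paper instead integrates over $\mathbb{R}^{d}\backslash D_{\varepsilon }$ using the polar formula (\ref{polar formula}) for the cone measure on $\partial D_{\varepsilon }$, and uses convexity of $g$ to show $f(r\theta )\leq \varepsilon e^{-\alpha R(r-1)}$ along rays through $\partial D_{\varepsilon }$; this yields the intermediate inequality $\zeta _{\mu }(\varepsilon )\leq c_{2}\varepsilon \,\mathrm{vol}_{d}(D_{\varepsilon })$, which is intrinsic to the level set $D_{\varepsilon }$ and only afterwards is combined with the volume bound $\mathrm{vol}_{d}(D_{\varepsilon })\leq c_{1}(\log \varepsilon ^{-1})^{d}$ (itself derived from the same cone domination). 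So the paper's argument isolates a structurally sharper statement (excess mass $\lesssim \varepsilon$ times the volume of the level set), while yours avoids the cone measure and the ray-decay argument entirely and reaches the stated bound (\ref{fundamental function}) with less machinery. Two cosmetic points: the constant in your tail estimate requires $R$ bounded below (your condition $\varepsilon _{0}<e^{-m}$ handles this), and if $m<1$ you should also shrink $\varepsilon _{0}$ below $e^{-1}$ (or enlarge $c$) so that $(\log \varepsilon ^{-1})^{d-1}\leq (\log \varepsilon ^{-1})^{d}$ can be absorbed; neither affects correctness.
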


\begin{pf}
Since $\mu$ has a log-concave density, it necessarily has a nonsingular
covariance matrix, and there exists an affine map $T$ such that $\mu
^{\prime}=T\mu$ is isotropic. The density of $\mu^{\prime}$ is%
\[
\widetilde{f}(x)=\det\bigl(T^{-1}\bigr)f\bigl(T^{-1}x\bigr)
\]
and $D_{\varepsilon}=T^{-1}\widetilde{D}_{\widetilde{\varepsilon}}$, where
$\widetilde{\varepsilon}=\varepsilon\det T^{-1}$ and $\widetilde{D}_{%
\widetilde{\varepsilon}}=\{x\dvtx \widetilde{f}(x)\geq\widetilde
{\varepsilon}%
\} $. Since $\mu^{\prime}$ is isotropic, we may use Lemma~\ref{mass
outside contour}, which gives
\begin{eqnarray*}
\mu\bigl(\mathbb{R}^{d}\setminus D_{\varepsilon}\bigr) &=&
\mu^{\prime
}\bigl(\mathbb{R}%
^{d}\setminus
\widetilde{D}_{\widetilde{\varepsilon}}\bigr)
\\
&\leq&c^{\prime}\widetilde{\varepsilon}\bigl(\log\widetilde{\varepsilon}
^{-1}\bigr)^{d}
\\
&<&c\varepsilon \bigl( \log\varepsilon^{-1} \bigr) ^{d}.
\end{eqnarray*}
\upqed\end{pf}

\begin{lemma}
For any $x\in\mathbb{R}^{d}$ there exist $c^{\prime},\delta_{0}>0$
and a
function $p\dvtx (0,\delta_{0})\rightarrow(0,\infty)$ such that for all $%
\delta\in(0,\delta_{0})$,%
%
\begin{equation}
p(\delta)\leq c^{\prime}\frac{\log\log\delta^{-1}}{\log\delta^{-1}} \label{pfunc}
\end{equation}
and%
%
\begin{equation}
F_{\delta}\subset(1+p) (D_{\delta}-x)+x \label{outer}.
\end{equation}
\end{lemma}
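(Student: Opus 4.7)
The plan is to sandwich $F_{\delta }$ between two density level sets: first show that $F_{\delta }\subset D_{\delta ^{\prime }}$ for a suitable $\delta ^{\prime }=\delta ^{\prime }(\delta )$ slightly smaller than $\delta $, and then that $D_{\delta ^{\prime }}\subset (1+p)(D_{\delta }-x)+x$ for $p=O(\log \log \delta ^{-1}/\log \delta ^{-1})$. Since $f$ is non-vanishing, $f(x)>0$, so by shrinking $\delta _{0}$ one can assume $x$ lies in the interior of $D_{\delta }$ throughout.

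For the first inclusion I would fix $y\notin D_{\delta ^{\prime }}$. Because $D_{\delta ^{\prime }}$ is a closed convex set (from continuity and log-concavity of $f$), the Hahn--Banach theorem furnishes a closed half-space $\mathfrak{H}$ containing $y$ and disjoint from $D_{\delta ^{\prime }}$, so $\mu (\mathfrak{H})\leq \zeta _{\mu }(\delta ^{\prime })\leq c\delta ^{\prime }(\log (\delta ^{\prime })^{-1})^{d}$ by the previous lemma. Choosing $\delta ^{\prime }:=\delta /(2c(\log \delta ^{-1})^{d+1})$ one checks that $\zeta _{\mu }(\delta ^{\prime })<\delta $ for small $\delta $, which gives $\widetilde{f}(y)<\delta $ and therefore $y\notin F_{\delta }$; this yields $F_{\delta }\subset D_{\delta ^{\prime }}$.

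For the second inclusion, given $y\in D_{\delta ^{\prime }}$ with $y\neq x$, I would set $\theta =(y-x)/||y-x||_{2}$ and study the convex function $h(r)=g(x+r\theta )$. Let $\rho >0$ satisfy $h(\rho )=\log \delta ^{-1}$, so that $x+\rho \theta \in \partial D_{\delta }$; existence follows from $h(0)=g(x)<\log \delta ^{-1}$ and $h(r)\rightarrow \infty $ as $r\rightarrow \infty $. Writing $y=x+\rho ^{\prime }\theta $, I may assume $\rho ^{\prime }>\rho $ (else $y\in D_{\delta }$ already). Convexity of $h$ applied to $\rho =(1-\rho /\rho ^{\prime })\cdot 0+(\rho /\rho ^{\prime })\cdot \rho ^{\prime }$ yields
\begin{equation*}
\frac{\rho ^{\prime }}{\rho }\leq \frac{h(\rho ^{\prime })-g(x)}{\log \delta ^{-1}-g(x)}\leq \frac{\log (\delta ^{\prime })^{-1}-g(x)}{\log \delta ^{-1}-g(x)}=1+\frac{\log (\delta /\delta ^{\prime })}{\log \delta ^{-1}-g(x)}.
\end{equation*}
Since $\log (\delta /\delta ^{\prime })\leq (d+2)\log \log \delta ^{-1}$ for small $\delta $ and $\log \delta ^{-1}-g(x)\geq \tfrac{1}{2}\log \delta ^{-1}$, this forces $\rho ^{\prime }/\rho \leq 1+p(\delta )$ with $p(\delta )\leq c^{\prime }\log \log \delta ^{-1}/\log \delta ^{-1}$. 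Since $x+\rho \theta \in D_{\delta }$ and $D_{\delta }-x$ is star-shaped about $0$, this gives $y\in (1+p)(D_{\delta }-x)+x$.

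The main subtlety is balancing the choice of $\delta ^{\prime }$: it must be small enough that $\zeta _{\mu }(\delta ^{\prime })<\delta $, which forces $\delta /\delta ^{\prime }$ to grow at least like a power of $\log \delta ^{-1}$; yet large enough that $\log (\delta /\delta ^{\prime })/\log \delta ^{-1}$ remains of order $\log \log \delta ^{-1}/\log \delta ^{-1}$. The choice $\delta ^{\prime }=\delta /(2c(\log \delta ^{-1})^{d+1})$ optimizes this trade-off, and calibrating its constants is the main technical point.
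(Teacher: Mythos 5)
Your proposal is correct and follows essentially the same route as the paper: it traps $F_{\delta }$ inside a slightly deeper level set $D_{\delta ^{\prime }}$ via the bound on $\zeta _{\mu }$ together with a separating half-space, and then compares $D_{\delta ^{\prime }}$ with $D_{\delta }$ radially from $x$ using convexity of $g$ along rays, which is exactly the paper's argument (the paper merely defines the intermediate level $\varepsilon $ implicitly by $\delta =c\varepsilon (\log \varepsilon ^{-1})^{d}$ instead of your explicit $\delta ^{\prime }=\delta /(2c(\log \delta ^{-1})^{d+1})$). The only point to make explicit is that the step $\log \delta ^{-1}-g(x)\geq \tfrac{1}{2}\log \delta ^{-1}$ requires shrinking $\delta _{0}$ below $\min \{1,f(x)^{2}\}$ (not just below $f(x)$), precisely as the paper does.
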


\begin{pf}
Let $c>0$ be the constant in (\ref{fundamental function}). A brief analysis
of the function $t\mapsto ct ( \log t^{-1} ) ^{d}$ shows that there
exists $\delta_{0}>0$ and a function $\varepsilon=\varepsilon(\delta)$
defined implicitly for all $\delta\in(0,\delta_{0})$ by the equation
$%
\delta=c\varepsilon ( \log\varepsilon^{-1} ) ^{d}$. We can take
$\delta_{0}$ small enough to ensure that $\varepsilon<\delta$ and
that $%
\log\delta^{-1}<\log\varepsilon^{-1}<2\log\delta^{-1}$. If we
define%
\[
p(\delta)=3\frac{\log\varepsilon^{-1}-\log\delta^{-1}}{\log\delta
^{-1}%
},
\]
then $\delta^{1+p/2}<\varepsilon$, and (\ref{pfunc}) holds. Since $%
D_{\varepsilon}$ is both compact and convex, for any point $y\notin
D_{\varepsilon}$ there exists (by the Hahn--Banach theorem),\vadjust{\goodbreak} a closed
half-space $\mathfrak{H}$ with $y\in\mathfrak{H}$ and $\mathfrak{H}\cap
D_{\varepsilon}=\varnothing$. Since $\mathfrak{H}\subset\mathbb{R}%
^{d}\setminus D_{\varepsilon}$, (\ref{fundamental function}) implies
that $%
\mu(\mathfrak{H})<\delta$ and by definition of $F_{\delta}$, $y\notin
F_{\delta}$. This goes to show that $F_{\delta}\subset D_{\varepsilon
}$. Let $x\in\mathbb{R}^{d}$. For any $\theta\in S^{d-1}$, consider the
function $f_{\theta}(t)=f(x+t\theta)=e^{-g_{\theta}(t)}$, $t\in
\mathbb{R}
$. This notation differs slightly from that in the proof of Theorem 1. By
continuity and log-concavity, if $\varepsilon$ is small enough, then
for all
$\theta\in S^{d-1}$ there is a unique $v>0$ such that \mbox{$f_{\theta
}(v)=\varepsilon$}; we denote this number by $f_{\theta
}^{-1}(\varepsilon)$. We may assume that $\delta_{0}<\min\{1,f(x)^{2}\}$. Note that
$1<\delta
/\varepsilon<\delta^{-p/2}$ and $\log\delta^{-1}+\log f(x)\geq
1/2\log
\delta^{-1}$. By convexity of $g_{\theta}$, for any $0<s<v$ we have $%
s^{-1}(g_{\theta}(s)-g_{\theta}(0))\leq(v-s)^{-1}(g_{\theta
}(v)-g_{\theta}(s))$. Taking $v=f_{\theta}^{-1}(\varepsilon)$ and $%
s=f_{\theta}^{-1}(\delta)$, this becomes%
%
\begin{eqnarray}
\label{yet another inequality} \frac{f_{\theta}^{-1}(\varepsilon)-f_{\theta}^{-1}(\delta
)}{f_{\theta
}^{-1}(\delta)} &\leq&\frac{\log\varepsilon^{-1}-\log\delta
^{-1}}{\log
\delta^{-1}+\log f(x)}.
\nonumber
\\[-8pt]
\\[-8pt]
\nonumber
&<&p.
\end{eqnarray}
Inequality (\ref{yet another inequality}) reduces to $f_{\theta
}^{-1}(\varepsilon)\leq(1+p)f_{\theta}^{-1}(\delta)$. Since this holds
for any $\theta\in S^{d-1}$, $D_{\varepsilon}\subset(1+p)(D_{\delta
}-x)+x $, and (\ref{outer}) follows.
\end{pf}

\begin{lemma}
There exists $\delta_{0}>0$ such that for all $\delta\in(0,\delta_{0})$
we have the relation%
%
\begin{equation}
\bigl(1+8\lambda^{1/d}\bigr)^{-1}\bigl(D_{\delta}-x^{\prime}
\bigr)+x^{\prime}\subset F_{\delta}, \label{inner}
\end{equation}
where $\lambda=\operatorname{vol}_{d}(D_{\delta})^{-1}$, and $x^{\prime}$
is the
centroid of $D_{\delta}$.
\end{lemma}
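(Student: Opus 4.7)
The plan is to reduce (\ref{inner}) to the convex floating body estimate (\ref{Fr bound}) applied to $K = D_{\delta}$ with floating parameter exactly $\lambda = \mathrm{vol}_{d}(D_{\delta})^{-1}$. The key observation that makes this choice of $\lambda$ natural is that $f \geq \delta$ on $D_{\delta}$, so a volumetric cap of $D_{\delta}$-measure at least $1$ automatically has $\mu$-measure at least $\delta$. With this in mind, the proof splits into two inclusions:
\begin{equation*}
(1+8\lambda^{1/d})^{-1}(D_{\delta}-x')+x' \;\subset\; (D_{\delta})_{\lambda} \;\subset\; F_{\delta}.
\end{equation*}

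For the first inclusion, I would invoke (\ref{Fr bound}) directly with $K = D_{\delta}$ and $x = x'$ the centroid. Before doing so I need $\lambda < 8^{-d}$, which is where the smallness of $\delta$ enters. By the cone domination (\ref{domination of cone}) the density $f$ decays uniformly exponentially, so $D_{\delta} \supset \{x : m\|x\|_{2} - c \leq \log \delta^{-1}\}$, a Euclidean ball whose radius grows like $\log \delta^{-1}$. Consequently $\mathrm{vol}_{d}(D_{\delta}) \to \infty$ as $\delta \to 0$, so $\lambda \to 0$ and the hypothesis of (\ref{Fr bound}) is satisfied for all $\delta$ below some threshold $\delta_{0}$.

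For the second inclusion, I would unpack the definition of the convex floating body. A point $x$ lies in $(D_{\delta})_{\lambda}$ precisely when every half-space $\mathfrak{H}$ containing $x$ satisfies $\mathrm{vol}_{d}(D_{\delta} \cap \mathfrak{H}) \geq \lambda \cdot \mathrm{vol}_{d}(D_{\delta}) = 1$. For such $x$ and any such $\mathfrak{H}$,
\begin{equation*}
\mu(\mathfrak{H}) \;\geq\; \int_{D_{\delta} \cap \mathfrak{H}} f(y)\,dy \;\geq\; \delta \cdot \mathrm{vol}_{d}(D_{\delta} \cap \mathfrak{H}) \;\geq\; \delta,
\end{equation*}
so $\widetilde{f}(x) \geq \delta$ and $x \in F_{\delta}$. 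Chaining the two inclusions gives (\ref{inner}).

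There is no serious obstacle here once the right parameter $\lambda = \mathrm{vol}_{d}(D_{\delta})^{-1}$ is identified; the content of the lemma is essentially a clever bookkeeping of the interplay between the density lower bound on $D_{\delta}$ and the volumetric floating body. The only mildly delicate point is the boundary-handedness in the definition of $(D_{\delta})_{\lambda}$, i.e.\ whether the defining half-spaces are taken open or closed, but this can be handled by an infinitesimal perturbation argument that does not affect the inclusion above.
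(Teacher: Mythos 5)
Your proof is correct and follows essentially the same route as the paper: show $(D_{\delta })_{\lambda }\subset F_{\delta }$ using the pointwise bound $f\geq \delta $ on $D_{\delta }$ (so any half-space meeting $D_{\delta }$ in volume at least $1$ has $\mu $-mass at least $\delta $), and then invoke (\ref{Fr bound}) for $K=D_{\delta }$ at its centroid. One correction to a minor step: your justification that $\lambda <8^{-d}$ for small $\delta $ misuses (\ref{domination of cone}), which is an \emph{upper} bound $f(x)\leq e^{\widetilde{c}-m||x||_{2}}$ and therefore gives $D_{\delta }\subset \{x:m||x||_{2}-c\leq \log \delta ^{-1}\}$, the reverse of the inclusion you wrote (for a Gaussian, $D_{\delta }$ has radius of order $(\log \delta ^{-1})^{1/2}$, not $\log \delta ^{-1}$). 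The needed fact is still immediate: since $f$ is non-vanishing and $g$ is a finite convex function, $g$ is bounded above on each ball $RB_{2}^{d}$, so $D_{\delta }\supset RB_{2}^{d}$ as soon as $\delta <\inf \{f(x):||x||_{2}\leq R\}$; hence $\mathrm{vol}_{d}(D_{\delta })\rightarrow \infty $ and $\lambda \rightarrow 0$ as $\delta \rightarrow 0$, which is all the application of (\ref{Fr bound}) requires.
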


\begin{pf}
Let $\delta_{0}$ be such that $\operatorname{vol}_{d}(D_{\delta
_{0}})>8^{d}$. We
use the notation $(D_{\delta})_{\lambda}$ for the convex floating body
with parameter $\lambda>0$ corresponding to the uniform probability measure
on $D_{\delta}$. If $\mathfrak{H}$ is any half-space with $\mu
(\mathfrak{H}%
)<\delta$, then $\operatorname{vol}_{d}(\mathfrak{H}\cap D_{\delta})<1$.
Hence $%
(D_{\delta})_{\lambda}\subset F_{\delta}$, where $\lambda=\mathrm
{vol}%
_{d}(D_{\delta})^{-1}$. The result now follows from inequality (\ref{Fr
bound}).
\end{pf}

\begin{lemma}
Let $K,L\subset\mathbb{R}^{d}$ be convex bodies such that there exist
$x,x^{\prime}\in \operatorname{int}(K\cap L)$ and $0<r<(8d)^{-1}$ for which
%
\begin{equation}
(1+r)^{-1}(K-x)+x\subset L\subset(1+r) \bigl(K-x^{\prime}
\bigr)+x^{\prime}. \label{almost geometric}
\end{equation}
Then%
%
\begin{equation}
d_{\mathfrak{L}}(K,L)\leq1+8\,dr. \label{skew geometric}
\end{equation}
\end{lemma}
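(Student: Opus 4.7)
The plan is to exhibit a single point $y \in int(K \cap L)$ and a scalar $\lambda = 1 + 8dr$ for which
$\lambda^{-1}(K-y)+y \subset L \subset \lambda(K-y)+y$.
This is equivalent to the defining condition $d_{\mathfrak{L}}(K,L,y) \leq \lambda$, since rearranging gives $\lambda^{-1}(L-y)+y \subset K \iff L \subset \lambda(K-y)+y$ and $K \subset \lambda(L-y)+y \iff \lambda^{-1}(K-y)+y \subset L$. Using the affine invariance (\ref{af}), one may assume without loss of generality that the John ellipsoid of $K$ is $B_2^d$, so that $B_2^d \subset K \subset dB_2^d$ and in particular $\|x\|_2,\|x'\|_2 \leq d$. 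The natural candidate for the common center is then $y = 0$.

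The main step is to re-center each hypothesized containment at $0$. For the outer containment, the hypothesis may be rewritten as $L \subset (1+r)K - rx'$; to upgrade this to $L \subset (1+s)K$ it suffices to show that $\frac{(1+r)k - rx'}{1+s} \in K$ for every $k \in K$. Writing this point as the convex combination $\frac{1+r}{1+s}k + \frac{s-r}{1+s}q'$ with auxiliary point $q' = -\frac{r}{s-r}x'$, one has $\|q'\|_2 \leq \frac{rd}{s-r} \leq 1$ as soon as $s \geq r(d+1)$, placing $q' \in B_2^d \subset K$. A symmetric treatment of the inner hypothesis $(1+r)^{-1}(K-x)+x \subset L$ yields the inner containment $(1+s)^{-1}K \subset L$ once the dual auxiliary point $q = -\frac{r(1+s)}{s-r}x$ lies in $K$; this requires $r(1+s)d \leq s-r$. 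Both inequalities are satisfied by $s = 8dr$ using the standing assumption $r < (8d)^{-1}$ (which gives $1-rd > 7/8$, easily absorbing the constants).

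To finish I would verify $0 \in int(K \cap L)$. The membership $0 \in int(K)$ is immediate from $B_2^d \subset K$, while $0 \in int(L)$ follows from the observation that the homeomorphism $k \mapsto (1+r)^{-1}(k-x)+x$ sends $-rx \in int(K)$ (note $\|rx\|_2 \leq rd < 1/8$) to $0$, so $0$ lies in the interior of $(1+r)^{-1}(K-x)+x \subset L$.

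The computational core---verifying that the two auxiliary points lie in $K$---is genuinely routine bookkeeping once the structure is laid out. The only conceptual ingredient is the choice $y = 0$ after John normalization; the factor of $d$ in the conclusion enters precisely because $\|x\|_2$ and $\|x'\|_2$ may be as large as $d$ once the John ellipsoid of $K$ is normalized. A more delicate choice of $y$ (for instance a convex combination of $x$ and $x'$ chosen so that the two auxiliary points coalesce with points already in $K$) would likely produce a bound of the form $1+O(r)$ independent of dimension, but the John-center argument already delivers the stated constant with minimal effort.
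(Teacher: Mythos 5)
Your proof is correct and takes essentially the same route as the paper: put $K$ in John position so that $B_2^d\subset K\subset dB_2^d$ and $\|x\|_2,\|x'\|_2\leq d$, then absorb the off-center translations into $K$ via the unit ball. The paper does this absorption by first deducing $d_{\mathcal{H}}(K,L)\leq 4dr$ and then converting to $d_{\mathfrak{L}}(K,L)\leq (1-4dr)^{-1}\leq 1+8dr$, while you verify the two origin-centered inclusions directly through convex combinations with auxiliary points in $B_2^d$; the difference is cosmetic.
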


\begin{pf}
Since the statement of the lemma is invariant under affine transformations
that act simultaneously on $K$ and $L$, we may assume without loss of
generality that the John ellipsoid of $K$ is $B_{2}^{d}$. Hence $%
B_{2}^{d}\subset K\subset  dB_{2}^{d}$ and $\Vert x\Vert _{2}$,$\Vert x^{\prime
}\Vert _{2}\leq d$.  Note also that $L\subset3 dB_{2}^{d}$.  Using these facts
and manipulating (\ref{almost geometric}) in the\vadjust{\goodbreak} obvious way, we see that
both of the following relations hold:%
\begin{eqnarray*}
L &\subset&K+2 drB_{2}^{d},
\\
K &\subset&L+4 drB_{2}^{d}.
\end{eqnarray*}
By definition of the Hausdorff distance, $d_{\mathcal{H}}(K,L)\leq4 dr<1/2$.
Since $B_{2}^{d}\subset K$, inequality (\ref{LH}) implies that
$d_{\mathfrak{%
L}}(K,L)\leq1+8 dr$.
\end{pf}

\begin{pf*}{Proof of equation (\ref{FD1})}
Since $\lim_{\delta\rightarrow0}p(\delta)=\lim_{\delta\rightarrow
0}\lambda(\delta)=0$, equation~(\ref{FD1}) now follows from (\ref{outer}),
(\ref{inner}) and (\ref{skew geometric}).
\end{pf*}

\begin{remark}
\label{Remark only}There is no lower bound on the growth rate of
$\mathrm{vol%
}_{d}(D_{\delta})$; indeed the function could grow arbitrarily
slowly.  However in the case of the Schechtman--Zinn distributions, $\operatorname{vol}
_{d}(D_{\delta})=(\log(c_{p}^{d}/\delta))^{d/p}\operatorname{vol}%
_{d}(B_{p}^{d}), $ and we leave it to the reader to combine this with
(\ref%
{outer}), (\ref{pfunc}) and (\ref{inner}) to obtain a quantitative upper
bound on $d_{\mathfrak{L}}(F_{\delta},D_{\delta})$.
\end{remark}

\begin{pf*}{Proof of equation (\protect\ref{FR1})}
Let $\varepsilon>0$ be given. Using the notation from the proof of Theorem
1, for any $\theta\in S^{d-1}$ we define%
\[
f_{\theta}(t)=-\frac{d}{dt}\mu(\mathfrak{H}_{\theta,t}).
\]
This function is the density of a log-concave probability measure on $%
\mathbb{R}$ with cumulative distribution function $J_{\theta}(t)=1-\mu
(%
\mathfrak{H}_{\theta,t})$. Using Fubini's theorem we have%
\[
f_{\theta}(t)=Rf(\mathcal{H}_{\theta,t}),
\]
where $Rf$ is the Radon transform of $f$ as defined by (\ref{Radon def}).
For all $t\in\mathbb{R}$, the function $\theta\mapsto f_{\theta}(t)$ is
continuous and nonvanishing on $S^{d-1}$. This follows using the properties
imposed on $f$, together with Lebesgue's dominated convergence theorem
and (%
\ref{domination of cone}). Define $\alpha=\inf\{f_{\theta}(0)\dvtx \theta
\in
S^{d-1}\}$, and note that $\alpha>0$. By (\ref{domination of cone}) there
exists $t_{0}>0$ such that if $\beta=\sup\{f_{\theta}(t_{0})\dvtx \theta
\in
S^{d-1}\}$, then $\beta<\alpha$. Define $g_{\theta}(t)=-\log
f_{\theta
}(t)$, and let $\lambda=t_{0}^{-1}(\log\alpha-\log\beta)$ and
$\Delta
=\max\{1,\lambda^{-1}\log\lambda^{-1}\}$. By definition of $\alpha
$, $%
\beta$ and $\lambda$, for all $\theta\in S^{d-1}$ we have $%
t_{0}^{-1}(g_{\theta}(t_{0})-g_{\theta}(0))\geq\lambda$. By
convexity of
$g_{\theta}$, if $u>v\geq t_{0}$, then $g_{\theta}(u)\geq g_{\theta
}(v)+\lambda(u-v)$, which translates into $f_{\theta}(u)\leq f_{\theta
}(v)e^{-\lambda(u-v)}$. Let $\delta_{0}<\inf\{f_{\theta
}(t_{0}+1)\dvtx \theta
\in S^{d-1}\}$ be such that $\Delta\varepsilon^{-1}B_{2}^{d}\subset
F_{\delta_{0}}$. Consider any $\delta<\delta_{0}$, and momentarily
fix $%
\theta\in S^{d-1}$. By definition of $\alpha$ and $\beta$,%
\[
0<f_{\theta}(t_{0})\leq\beta<\alpha\leq f_{\theta}(0).
\]
Since $f_{\theta}$ is log-concave, it must be strictly decreasing on $%
[t_{0},\infty)$. Let $s=J_{\theta}^{-1}(1-\delta)$ and denote by $%
t=f_{\theta}^{-1}(\delta)$ the unique positive number such that
$f_{\theta
}(t)=\delta$. Consider the hyperplane $\mathcal{H}_{\theta,t}$ and the
half-space $\mathfrak{H}_{\theta,s}$. Note that%
\[
\mu(\mathfrak{H}_{\theta,s})=Rf(\mathcal{H}_{\theta,t})=\delta.
\]
By definition of $\delta_{0}$ and the equation $f_{\theta
}(-u)=f_{-\theta
}(u)$, we have $\min\{f_{\theta}(t_{0}+1),f_{\theta}(-t_{0}-1)\}
>\delta
_{0}$. By log-concavity we have $f_{\theta}(u)\geq\delta_{0}$ for
all $%
-t_{0}-1<u<t_{0}+1$, hence $t>t_{0}+1$. By the fundamental theorem of
calculus and the fact that $f_{\theta}(u)\geq\delta$, for all $u\in
\lbrack t-1,t]$, we have
\begin{eqnarray*}
\mu(\mathfrak{H}_{\theta,t-1}) &>&\mu\bigl\{x\in\mathbb{R}^{d}\dvtx
t-1\leq \langle\theta,x \rangle\leq t\bigr\}
\\
&=&\int_{t-1}^{t}f_{\theta}(u)\,du
\\
&\geq&\delta.
\end{eqnarray*}
Hence $\mathfrak{H}_{\theta,s}\subset\mathfrak{H}_{\theta,t-1}$ and $
s>t-1>t_{0}$. Thus, if $s\leq t$, then $|s-t|\leq1$. If $s>t$, then%
\begin{eqnarray*}
\delta&=&\int_{s}^{\infty}f_{\theta}(u)\,du
\\
&\leq&f_{\theta}(s)\int_{s}^{\infty}e^{-\lambda(u-s)}\,du
\\
&\leq&\delta e^{-\lambda(s-t)}\lambda^{-1}
\end{eqnarray*}
from which it follows that $s-t\leq\lambda^{-1}\log\lambda^{-1}$. Either
way, $|s-t|\leq\max\{1,\break \lambda^{-1}\log\lambda^{-1}\}=\Delta$.
Since $%
\Delta\varepsilon^{-1}B_{2}^{d}\subset F_{\delta_{0}}$, it follows
that $%
s\geq\Delta\varepsilon^{-1}$ and $(1-\varepsilon)s\leq t\leq
(1+\varepsilon)s$. Since this holds for all $\theta\in S^{d-1}$, and
recalling the definitions of $F_{\delta}$ and $R_{\delta}$, we have%
\[
(1-\varepsilon)F_{\delta}\leq R_{\delta}\leq(1+\varepsilon
)F_{\delta}.
\]
\upqed\end{pf*}

\section{Optimality}\label{optimality section}

Let $\Phi$ denote the cumulative standard normal distribution on~$\mathbb{R}
$,%
\[
\Phi(t)=(2\pi)^{-1/2}\int_{-\infty}^{t}e^{-({1}/{2})s^{2}}\,ds.
\]
By (\ref{sandwitch}) there exists $c>0$ such that for all $n\geq3$,%
%
\begin{equation}
\Phi^{-1}(1-1/n)\geq c(\log n)^{1/2}. \label{pr}
\end{equation}

\begin{lemma}
For all~$q>0$ and all $d\in\mathbb{N}$, there exists $c,\widetilde{c}>0$
such that for all $n\geq d+1$, if $(x_{i})_{1}^{n}$ is an i.i.d. sample from
the standard normal distribution on $\mathbb{R}^{d}$ and $%
P_{n}=\operatorname{conv}\{x_{i}\}_{1}^{n}$, then with probability at least
$1-\widetilde{c}%
(\log n)^{-q(d-1)/2}$ both of the following events occur:%
%
\begin{eqnarray}
d_{\mathcal{H}}(P_{n},F_{1/n}) &\geq&c(\log
n)^{-({1}/{2})-q},\label{lower bound}
\\
d_{\mathfrak{L}}(P_{n},F_{1/n}) &\geq&1+c(\log
n)^{-1-q}. \label{lower bound g}
\end{eqnarray}
\end{lemma}

\begin{pf}The probability bound is trivial when $d = 1$ and we may assume that $d \geq 2$.
It follows from a result of Schneider~\cite{Sc}\vadjust{\goodbreak} (see also~\cite{Gr},
page 326)
that for any polytope $K_{m}\subset\mathbb{R}^{d}$ with at most $m$
vertices,%
%
\begin{equation}
d_{\mathcal{H}}\bigl(K_{m},B_{2}^{d}
\bigr)>c_{d} \biggl( \frac{1}{m} \biggr) ^{{2}/{%
(d-1)}}.
\label{approx H}
\end{equation}
Since $F_{1/n}=\Phi^{-1}(1-1/n)B_{2}^{d}$, inequality (\ref{pr}) implies
that%
\[
d_{\mathcal{H}}(K_{m},F_{1/n})>c_{d}(\log
n)^{1/2} \biggl( \frac
{1}{m} \biggr) ^{{2}/{(d-1)}}.
\]
By a result of Baryshnikov and Vitale~\cite{Bary} (see also~\cite{Ball2}), the number of vertices
of $P_{n}$, denoted by $f_{0}(P_{n})$, obeys the inequality $%
\mathbb{E}f_{0}(P_{n})<\widetilde{c}_{d}(\log n)^{(d-1)/2}$. By Chebyshev's
inequality we have%
\[
\mathbb{P}\bigl\{f_{0}(P_{n})>(\log n)^{{(d-1)(q+1)}/{2}}
\bigr\}\leq\frac
{\mathbb{%
E}f_{0}(P_{n})}{(\log n)^{{(d-1)(q+1)}/{2}}}<\widetilde{c}_{d}(\log n)^{-%
{q(d-1)}/{2}},
\]
and if the complement of this event occurs, then so does (\ref{lower bound}).
By (\ref{approx H}) and (\ref{HL}), we get%
\[
d_{\mathfrak{L}}\bigl(K_{m},B_{2}^{d}
\bigr)>1+c_{d} \biggl( \frac{1}{m} \biggr) ^{{2%
}/{(d-1)}}.
\]
Since $d_{\mathfrak{L}}$ is preserved by invertible affine transformations
[as per (\ref{af})], the same inequality holds for all Euclidean balls. This
gives (\ref{lower bound g}).
\end{pf}

We can choose $q$ to be arbitrarily small, in which case (\ref{lower bound})
and (\ref{lower bound g}) complement Theorems~\ref{main 1} and~\ref%
{main 2}.

\section{\texorpdfstring{Proof of Theorem \protect\ref{main 4}}{Proof of Theorem 4}}

Let $\mathcal{K}_{d}$ denote the collection of all convex bodies in~$\mathbb{%
R}^{d}$ (compact convex sets with nonempty interior), and let $\mathcal
{K}%
_{d}^{\sharp}=\mathcal{K}_{d}\cup\{\{0\}\}$. If $\Omega$ is a convex
subset of a real vector space, then we define a function $\kappa\dvtx \Omega
\rightarrow\mathcal{K}_{d}^{\sharp}$ to be concave if for all $x,y\in
\Omega$ and all $\lambda\in(0,1)$, we have%
\[
\lambda\kappa(x)+(1-\lambda)\kappa(y)\subset\kappa\bigl(\lambda x+(1-\lambda)y
\bigr).
\]
If $\Omega$ has an ordering, then we define $\kappa$ to be nondecreasing
if for all $x,y\in\Omega$ with $x\leq y$, we have $\kappa(x)\subset
\kappa
(y)$.

\begin{lemma}
If $\kappa\dvtx [0,\infty)\rightarrow\mathcal{K}_{d}^{\sharp}$ is concave,
nondecreasing and\break $\bigcup_{t\in\lbrack0,\infty)}\kappa(t)=\mathbb
{R}^{d}$, then the function $g\dvtx \mathbb{R}^{d}\rightarrow\lbrack0,\infty)$ defined
by%
%
\begin{equation}
g(x)=\inf\bigl\{t\geq0\dvtx x\in\kappa(t)\bigr\} \label{g}
\end{equation}
is convex. Furthermore, $\kappa$ is continuous on $(0, \infty)$ with respect to the
Hausdorff distance, and for all $t>0$,%
%
\begin{equation}
\kappa(t)=\bigl\{x\in\mathbb{R}^{d}\dvtx g(x)\leq t\bigr\}. \label{k
rep}
\end{equation}
\end{lemma}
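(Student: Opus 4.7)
The plan is to extract everything from the definition of $g$, the monotonicity of $\kappa$, the concavity inclusion, and the hypothesis $\cup_{t\geq 0}\kappa(t)=\mathbb{R}^{d}$, which together already contain the required convex analytic information.

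First I will check that $g$ is well-defined and finite: the covering hypothesis gives some $t$ with $x\in\kappa(t)$, so the infimum in (\ref{g}) is over a non-empty set bounded below by $0$. The key preliminary fact I would establish next is that $x\in\kappa(s)$ for every $s>g(x)$. This follows by picking $s'\in[g(x),s)$ with $x\in\kappa(s')$ and applying monotonicity $\kappa(s')\subset\kappa(s)$. With this in hand, convexity of $g$ is almost immediate: given $x,y\in\mathbb{R}^{d}$, $\lambda\in(0,1)$, and any $s>g(x)$, $t>g(y)$, the concavity of $\kappa$ gives
\begin{equation*}
\lambda x+(1-\lambda)y\in\lambda\kappa(s)+(1-\lambda)\kappa(t)\subset\kappa(\lambda s+(1-\lambda)t),
\end{equation*}
hence $g(\lambda x+(1-\lambda)y)\leq\lambda s+(1-\lambda)t$, and taking infima yields convexity.

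Next I would prove the outer continuity identity $\kappa(t)=\bigcap_{s>t}\kappa(s)$, which is the main analytic step and yields (\ref{k rep}) directly: the inclusion $\subset$ is monotonicity, and for the reverse, given $x\in\bigcap_{s>t}\kappa(s)$, I fix any $r<t$ and any $y\in\kappa(r)$. Writing $t=\lambda s+(1-\lambda)r$ with $\lambda=(t-r)/(s-r)$, concavity of $\kappa$ gives $\lambda x+(1-\lambda)y\in\kappa(t)$; letting $s\to t^{+}$ forces $\lambda\to 1$, and since $\kappa(t)$ is closed, $x\in\kappa(t)$. The representation (\ref{k rep}) then follows: if $g(x)\leq t$, the preliminary fact gives $x\in\kappa(s)$ for all $s>t$, so $x\in\bigcap_{s>t}\kappa(s)=\kappa(t)$; the converse is by definition of $g$.

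For Hausdorff continuity I would treat the two sides separately. Right continuity: for $t_{n}\downarrow t$, monotonicity gives $\kappa(t)\subset\kappa(t_{n})\subset\kappa(t_{1})$, so the $\kappa(t_{n})$ live in a common bounded set; a standard compactness argument combined with the outer continuity identity shows $\sup_{x\in\kappa(t_{n})}d(x,\kappa(t))\to 0$ (otherwise a cluster point would lie in $\bigcap_{s>t}\kappa(s)\setminus\kappa(t)=\emptyset$). Left continuity, for $t>0$ and $t_{n}\uparrow t$, is the step I expect to require the most care, and I would handle it by the concavity inclusion applied to the convex combination $t_{n}=(1-t_{n}/t)\cdot 0+(t_{n}/t)\cdot t$, which gives
\begin{equation*}
(1-t_{n}/t)\,\kappa(0)+(t_{n}/t)\,\kappa(t)\subset\kappa(t_{n})\subset\kappa(t).
\end{equation*}
Since $\kappa(0)$ is bounded, the left side converges to $\kappa(t)$ in Hausdorff distance, which squeezes $\kappa(t_{n})\to\kappa(t)$. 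The main obstacle is really this left continuity, because without the concavity-driven inner approximation one could only appeal to monotonicity, which is insufficient; but the explicit convex combination through $\kappa(0)$ circumvents this cleanly.
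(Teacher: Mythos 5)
Your proof is correct, but it runs the logic in the opposite order from the paper and uses different intermediate facts, so it is worth recording the comparison. The paper first translates so that $0\in\kappa(0)$ and uses the convex combination $t=\frac{\varepsilon}{t+\varepsilon}\,0+\frac{t}{t+\varepsilon}(t+\varepsilon)$ to get the quantitative sandwich $\frac{t-\varepsilon}{t}\kappa(t)\subset\kappa(t-\varepsilon)\subset\kappa(t)\subset\kappa(t+\varepsilon)\subset\frac{t+\varepsilon}{t}\kappa(t)$, which yields Hausdorff continuity at once; it then deduces (\ref{k rep}) from continuity plus closedness (if $x\notin\kappa(t)$ then $d(x,\kappa(t))>0$ forces $g(x)>t$), and only then proves convexity, using (\ref{k rep}) to know the infimum defining $g$ is attained. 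You instead prove convexity directly from the definition by working with $s>g(x)$, $t>g(y)$ and taking infima (so you never need attainment), establish the outer-regularity identity $\kappa(t)=\bigcap_{s>t}\kappa(s)$ by a concavity-plus-closedness limit through a point of $\kappa(r)$, $r<t$, read off (\ref{k rep}) from it, and then split continuity into a right-continuity compactness argument driven by that identity and a left-continuity squeeze through $\kappa(0)$ (the latter being essentially the paper's scaling trick, untranslated). What the paper's route buys is brevity and an explicit modulus: the factors $\frac{t\pm\varepsilon}{t}$ give continuity in one stroke with no compactness extraction. What your route buys is that convexity and (\ref{k rep}) are obtained without first proving continuity, and your convexity argument is self-contained. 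One small caveat, which applies equally to the paper: both arguments only establish continuity on $(0,\infty)$ (your identity proof needs $r<t\,$, the paper's needs $\varepsilon<t$), and indeed right continuity at $t=0$ can genuinely fail (e.g.\ $\kappa(0)=B_2^d$, $\kappa(t)=(2+t)B_2^d$ for $t>0$), consistent with (\ref{k rep}) being asserted only for $t>0$; so your phrasing ``continuity'' should be read the same way as the paper's.
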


\begin{pf}
By hypothesis, $\kappa(0)\neq\varnothing$. If $0\notin\kappa(0)$,
then we
define $\kappa^{\sharp}(t)=\kappa(t)-x_{0}$, where $x_{0}\in\kappa(0)$.
The function $\kappa^{\sharp}$ enjoys all of the properties that
$\kappa$
does, and the function%
\[
g^{\sharp}(x)=\inf\bigl\{t\geq0\dvtx x\in\kappa^{\sharp}(t)\bigr\}
\]
is related to $g$ by the equation $g^{\sharp}(x)=g(x+x_{0})$. Note
that $0\in\kappa^{\sharp}(0)$. If the lemma holds for the functions $\kappa
^{\sharp}$ and $g^{\sharp}$, it will necessarily hold for $\kappa$
and $g$. We may therefore, without loss of generality, assume that $0\in
\kappa(0)$. For any $0<\varepsilon<t$, we have the convex combination%
\[
t=\frac{\varepsilon}{t+\varepsilon}0+\frac{t}{t+\varepsilon}%
(t+\varepsilon).
\]
Exploiting the concavity of $\kappa$, this leads to%
\[
\kappa(t+\varepsilon)\subset\frac{t+\varepsilon}{t}\kappa(t).
\]
Similarly,%
\[
\frac{t-\varepsilon}{t}\kappa(t)\subset\kappa(t-\varepsilon).
\]
Hence $\kappa$ is continuous with respect to the Hausdorff distance. By
definition of $g$, $\kappa(t)\subset\{x\in\mathbb{R}^{d}\dvtx g(x)\leq t\}
$. Since $\kappa(t)$ is a closed set, if $x\notin\kappa(t)$, then
$d(x,\kappa
(t))>0$ and by continuity of $\kappa$, $g(x)>t$. This implies (\ref{k
rep}%
). Consider any $x,y\in\mathbb{R}^{d}$ and $\lambda\in(0,1)$.
Let $t = g(x)$ and $s = g(y)$. For all $t' > t$ and $s' > s$, $x\in \kappa(t')$ and $y \in \kappa(s')$.
Therefore
\begin{eqnarray*}
\lambda x + (1 -\lambda)y &\in& \lambda \kappa\bigl(t'\bigr) + (1
-\lambda)\kappa\bigl(s'\bigr)\\
&\subset& \kappa\bigl(\lambda t' + (1 +\lambda)s'\bigr)
\end{eqnarray*}
This implies that $g(\lambda x+(1-\lambda )y)\leq \lambda t' +(1-\lambda)s'$.
Since this holds for all such~$t'$ and $s'$, it follows that g is convex.
\end{pf}

Note that the function $g$ is a generalization of the Minkowski functional
of a convex body $K$, in which case $\kappa(t)=tK$. The converse of the
preceding lemma also holds; if we are given a convex function $g$ and define
$\kappa$ via (\ref{k rep}), then $\kappa$ is concave.

If $(K_{n})_{n=1}^{\infty}$ is a sequence of convex bodies such that the
partial Minkowski sums $S_{N}=\sum_{n=1}^{N}K_{n}$ converge in the Hausdorff
distance to a convex body $S$, then we write $S=\sum_{n=1}^{\infty}K_{n}$
and refer to this as a Minkowski series. Note that $S$ can also be defined
by the equation%
\[
\Vert x\Vert _{S^{\circ}}=\sum_{n=1}^{\infty}
\Vert x\Vert _{K_{n}^{\circ}}.
\]
Basic properties of a Minkowski series are easy to prove, and we leave such
an investigation to the reader. The following lemma is also fairly
straightforward.

\begin{lemma}
For each $n\in\mathbb{N}$, let $\alpha_{n}\dvtx [0,\infty)\rightarrow
\lbrack
0,\infty)$ be a concave function, and let $K_{n}$ be a convex body
with $%
0\in K_{n}$. Provided that%
\[
\sum_{n=1}^{\infty}\alpha_{n}(t)\operatorname{diam}(K_{n})<
\infty
\]
for all $t\geq0$, then the function $\kappa\dvtx [0,\infty)\rightarrow
\mathcal{K}_{d}^{\sharp}$ defined by%
\[
\kappa(t)=\sum_{n=1}^{\infty}
\alpha_{n}(t)K_{n}
\]
is concave.
\end{lemma}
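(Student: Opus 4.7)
First I would establish that the series defining $\kappa(t)$ converges to a bounded set, and then verify the defining inclusion $\lambda\kappa(s)+(1-\lambda)\kappa(t)\subset \kappa(\lambda s+(1-\lambda)t)$ for $s,t\geq 0$ and $\lambda\in(0,1)$ termwise in the Minkowski series.

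For well-definedness, observe that since $0\in K_n$, every point $x_n\in\alpha_n(t)K_n$ satisfies $\|x_n\|_2\leq \alpha_n(t)\,\mathrm{diam}(K_n)$, so the hypothesis $\sum_n\alpha_n(t)\,\mathrm{diam}(K_n)<\infty$ forces absolute convergence of every series $\sum_n x_n$ with $x_n\in\alpha_n(t)K_n$. A routine truncation-and-compactness argument shows $\kappa(t)$ is closed, and convexity is inherited from the $K_n$. Thus $\kappa(t)$ is a compact convex set, which we may include in $\mathcal{K}_d$ as an honorary member when it has empty interior, in accord with the remark preceding the lemma.

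For the concavity inclusion, take $z=\lambda x+(1-\lambda)y$ with $x=\sum_n\alpha_n(s)u_n$ and $y=\sum_n\alpha_n(t)v_n$ for some $u_n,v_n\in K_n$. Setting $a_n=\lambda\alpha_n(s)$ and $b_n=(1-\lambda)\alpha_n(t)$, the identity
\[
a_n u_n+b_n v_n=(a_n+b_n)\Bigl(\tfrac{a_n}{a_n+b_n}u_n+\tfrac{b_n}{a_n+b_n}v_n\Bigr)
\]
(trivially $0$ if $a_n+b_n=0$) together with convexity of $K_n$ places $a_n u_n+b_n v_n$ in $(a_n+b_n)K_n$. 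Concavity of $\alpha_n$ gives $a_n+b_n\leq\alpha_n(\lambda s+(1-\lambda)t)$, and since $0\in K_n$ one has the dilation inclusion $rK_n\subset RK_n$ for $0\leq r\leq R$ (because $(r/R)k=(r/R)k+(1-r/R)\cdot 0\in K_n$ for any $k\in K_n$). Combining these, $a_n u_n+b_n v_n\in \alpha_n(\lambda s+(1-\lambda)t)K_n$, and summing over $n$ lands $z$ in $\kappa(\lambda s+(1-\lambda)t)$.

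I do not expect any serious obstacle: the only place a geometric hypothesis enters nontrivially is the dilation inclusion $rK_n\subset RK_n$, which is exactly why the condition $0\in K_n$ is imposed. Everything else is direct manipulation of Minkowski sums combined with the concavity of the scalar weights $\alpha_n$, which is presumably why the author was content to leave the verification to the reader.
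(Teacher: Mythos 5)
Your proof is correct, and since the paper explicitly leaves this verification to the reader there is no written proof to compare it against; your argument (termwise use of convexity of $K_n$, concavity of the scalar weights $\alpha_n$, and the dilation inclusion $rK_n\subset RK_n$ coming from $0\in K_n$, plus the absolute-convergence and compactness remarks for well-definedness) is exactly the intended routine verification.
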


The space $\mathcal{K}_{d}$ is separable with respect to $d_{\mathrm{BM}}$, and we
shall use a sequence $(K_{n})_{n=1}^{\infty}$ that is dense in
$\mathcal{K}%
_{d}$. Since $d_{\mathrm{BM}}$ is blind to affine transformations, we can assume that
the John ellipsoid of each $K_{n}$ is $B_{2}^{d}$. As coefficients, we shall
use the functions%
\[
\alpha_{n}(t)=\cases{
2^{-n^{2}}t, & \quad $0\leq t\leq2^{2n^{2}},$
\vspace*{2pt}\cr
2^{n^{2}}, & \quad $2^{2n^{2}}<t<\infty.$}
\]
Note that for large values of $n$, the dominant coefficient at the
value $%
t=2^{2n^{2}}$ is $\alpha_{n}$. In fact $\sum_{j\neq n}\alpha
_{j}(2^{2n^{2}})$ is much smaller than $\alpha_{n}(2^{2n^{2}})$,%
\begin{eqnarray*}
\sum_{j\neq n}\alpha_{j}
\bigl(2^{2n^{2}}\bigr) &=&\sum_{j=1}^{n-1}2^{j^{2}}+2^{2n^{2}}
\sum_{j=n+1}^{\infty}2^{-j^{2}}
\\
&\leq&\sum_{j=1}^{n-1}2^{nj}+2^{2n^{2}}
\sum_{j=n+1}^{\infty}2^{-nj}
\\
&\leq&2^{n^{2}-n+2}
\\
&=&2^{-n+2}\alpha_{n}\bigl(2^{2n^{2}}\bigr).
\end{eqnarray*}
Hence,%
\[
d_{\mathrm{BM}}\bigl(\kappa\bigl(2^{2n^{2}}\bigr),K_{n}\bigr)
\leq1+2^{-n+2}d.
\]
Thus the sequence $(\kappa(n))_{n=1}^{\infty}$ is dense in $\mathcal
{K}%
_{d} $. Since each coefficient $\alpha_{n}$ is nondecreasing and
concave, $%
\kappa$ is concave and the function $g$ as defined by (\ref{g}) is convex.
Clearly, $\lim_{x\rightarrow\infty}g(x)=\infty$.  For some $c>0$, the
function%
\[
f(x)=2^{-g(cx)}
\]
is the density of a log-concave probability measure $\mu$ on $\mathbb
{R}%
^{d} $. For each $n\in\mathbb{N}$, $D_{2^{-n}}=\{x\in\mathbb{R}%
^{d}\dvtx f(x)\geq2^{-n}\}=\{x\in\mathbb{R}^{d}\dvtx g(cx)\leq n\}=c^{-1}\kappa
(n)$%
. Hence the sequence $(D_{1/n})_{n=2}^{\infty}$ is dense in $\mathcal
{K}%
_{d} $. By (\ref{FD1}), the sequence $(F_{1/n})_{n=3}^{\infty}$ is also
dense in~$\mathcal{K}_{d}$.\vadjust{\goodbreak}

We now use Theorem~\ref{main 1} with $q=1$. Let $\widetilde{\mathcal{K}}_{d}$
denote a countably dense subset of $\mathcal{K}_{d}$, and let $K\in
\widetilde{\mathcal{K}}_{d}$. For any $\varepsilon>0$, there exists an
increasing sequence of natural numbers $(k_{n})_{1}^{\infty}$ such
that $%
\lim_{n\rightarrow\infty}d_{\mathrm{BM}}(F_{1/k_{n}},K)=1$ and $\sum_{n=1}^{\infty
}3^{d+3}(\log k_{n})^{-1}<\varepsilon$. By (\ref{main bound}),%
\[
\lim_{n\rightarrow\infty}d_{\mathrm{BM}}(P_{k_{n}},K)=1
\]
with probability at least $1-\varepsilon$. Since this holds for all $%
\varepsilon>0$, $K\in \operatorname{cl}_{\mathrm{BM}}\{P_{n}\dvtx\break n\in\mathbb{N}$, $n\geq d+1\}$ almost
surely, where $\operatorname{cl}_{\mathrm{BM}}$ denotes closure in $\mathcal{K}_{d}$ with
respect to
$d_{\mathrm{BM}}$. Since this holds for all $K\in\widetilde{\mathcal{K}}_{d}$
and $%
\widetilde{\mathcal{K}}_{d}$ is countable, $\widetilde{\mathcal{K}}%
_{d}\subset \operatorname{cl}_{\mathrm{BM}}\{P_{n}\dvtx n\in\mathbb{N}$, $n\geq d+1\}$ almost surely.
The result now follows since $\widetilde{\mathcal{K}}_{d}$ is dense in $
\mathcal{K}_{d}$.

\section*{Acknowledgments}
Many thanks to Imre B\'{a}r\'{a}ny, John Fresen, Jill Fresen, Nigel
Kalton, Alexander Koldobsky, Mathieu Meyer, Mark Rudelson, Michael Taksar
and Artem Zvavitch for their comments, advice and support.  In particular,
I am grateful to Nigel Kalton for his friendship and all that he taught me.
The anonymous referees made valuable suggestions that led to major
improvements in the paper, and for this I am also grateful. This paper
was written
while the author was a graduate student at the University of Missouri.

%


\printaddresses

\end{document}